%
%

\documentclass{amsart}

\usepackage{amsmath, amssymb, amsthm} 
\usepackage{ascmac,fancybox}
\usepackage{braket, ulem}
\usepackage{enumerate}
\usepackage{graphicx}

\newtheorem{thm}{Theorem}[section]

\newtheorem{lem}[thm]{Lemma}

\newtheorem{prop}[thm]{Proposition}
\newtheorem{clm}{Claim}

\theoremstyle{definition}
\newtheorem{dfn}{Definition}[section]
\newtheorem{rmk}[dfn]{Remark}


\renewcommand{\mod}{\operatorname{mod}}
\numberwithin{equation}{section}

\title[Geometric intersection number and symplectic expansion]{%
Geometric intersection number of simple closed curves on a surface and symplectic expansions of free groups}
\author{Ryosuke YAMAMOTO}
\address{Faculty of Education, Gunma University, 4-2 Aramaki-machi, Maebashi, Gunma, 371-8510 Japan}
\email{yamryo0202@gunma-u.ac.jp}


%
\begin{document}

\begin{abstract}
 For two oriented simple closed curves on a compact orientable surface
 with a connected boundary
 we introduce
 a simple computation of a value in the first homology group of the surface,
 which detects in some cases that
 the geometric intersection number of the curves is greater than zero
 when their algebraic intersection number is zero.
 The value,
 computed from two elements of the fundamental group of the surface
 corresponding to the curves,
 is found in the difference between one of the elements
 and its image of the action of Dehn twist along the other.
 To give a description of the difference
 symplectic expansions of free groups is an effective tool,
 since we have an explicit formula
 for the action of Dehn twist on the target space of the expansion
 due to N.\ Kawazumi and Y.\ Kuno.
\end{abstract}

\maketitle


\section{Introduction}\label{sec:intro}

 Let $\Sigma=\Sigma_{g,1}$ be a compact orientable surface of genus $g \geq 1$ with a connected boundary.
%
 Let $\alpha$ and $\beta$ be oriented simple closed curves on $\Sigma$.
 We always assume that the intersections of the curves are transverse double points.
 The geometric intersection number of $\alpha$ with $\beta$,
 we denote by $i_{G}(\alpha, \beta)$,
 is the minimal number of intersection points of $\alpha$ with
 any simple closed curve on $\Sigma$ freely homotopic to $\beta$.
 The algebraic intersection number of $\alpha$ with $\beta$,
 we denote by $i_{A}(\alpha, \beta)$,
 is the sum of signs of intersection points of $\alpha$ with $\beta$,
 where the sign of an intersection point of $\alpha$ with $\beta$ is $+1$
 when a pair of tangent vectors of $\alpha$ and $\beta$ in this order
 is consistent with an oriented basis for $\Sigma$,
 otherwise the sign is $-1$.

 In this paper
 we describe an algebraic value associating with two simple closed curves $\alpha$ and $\beta$ on
 $\Sigma$,
 which may detect that
 $i_{G}(\alpha, \beta)$
 is greater than zero
 when
 $i_{A}(\alpha, \beta)$
 is zero.
 %
 Our strategy 
 is as follows:
%
 The geometric intersection number of two simple closed curves on a surface
 is closely related to
 the change of the homotopy type of one of the curves
 by performing a Dehn twist along the other.
 In particular
 it is known (e.g.\ \cite{FM}) that
 $i_{G}(\alpha, \beta)=0$
 if and only if
 the Dehn twist along one of the curves
 does not change homotopy type of the other,
 in other words,
 there exists a based homotopy class $b$ of a based loop freely homotopic to $\beta$
 such that $t_{\alpha}(b)=b$.
 %
 To analyze the action of Dehn twist on the fundamental group of a surface,
 the symplectic expansion,
 a certain type of (generalized) Magnus expansion defined by G.\ Massuyeau \cite{M},
 of free groups is an effective tool,
 since we have an explicit formula
 describing the action of Dehn twist on the target space of the expansion
 due to N.\ Kawazumi and Y.\ Kuno. (See \S 2 for detail.)
 %
  Applying the formula we can compute differences between expansion of $t_{\alpha}(b)$ and of $b$.
 Since the first degree part of the expansions of $t_{\alpha}(b)$ and $b$ are coincide
 because of the condition $i_{A}(\alpha, \beta)=0$,
 we will focus on the second degree part of them.


 In the rest of this section we mention our main theorem and demonstrate an application of the
 theorem.
 In \S \ref{sec:prelim}, we recall the terminology of Kawazumi-Kuno's theory developed in \cite{KK},
 and prepare some propositions for use in \S \ref{sec:proof},
 in which we give a proof of the main theorem.

\subsection{Main result}\label{subsec:main_theorem}
 The fundamental group $\pi=\pi_{1}(\Sigma, p)$ of $\Sigma$ with a point $p$ on $\partial\Sigma$
 is a free group of rank $2g$
 and the first homology group of $\Sigma$ with coefficients in $\mathbb{Q}$,
 $H=H_{1}(\Sigma,\mathbb{Q})=\pi/[\pi,\pi]\otimes\mathbb{Q}$,
 is a free abelian group of rank $2g$.
 For $x \in \pi$, we denote by $|x|$
 the element of $H$ corresponding to $x$ via the abelianization of $\pi$.
%

 Let $\{x_{1},y_{1},\dots, x_{g},y_{g}\}$ be a symplectic generators of $\pi$
 as in Figure \ref{fig:symp_gene}.
  \begin{figure}[h]
   \begin{center}\unitlength=1mm
    \begin{picture}(100,35)(-10,0)
     \footnotesize
     %
     \put(0, 15){$\Sigma_{g,1}:$}
     \put(13,0){\includegraphics[height=33mm]{%
     ./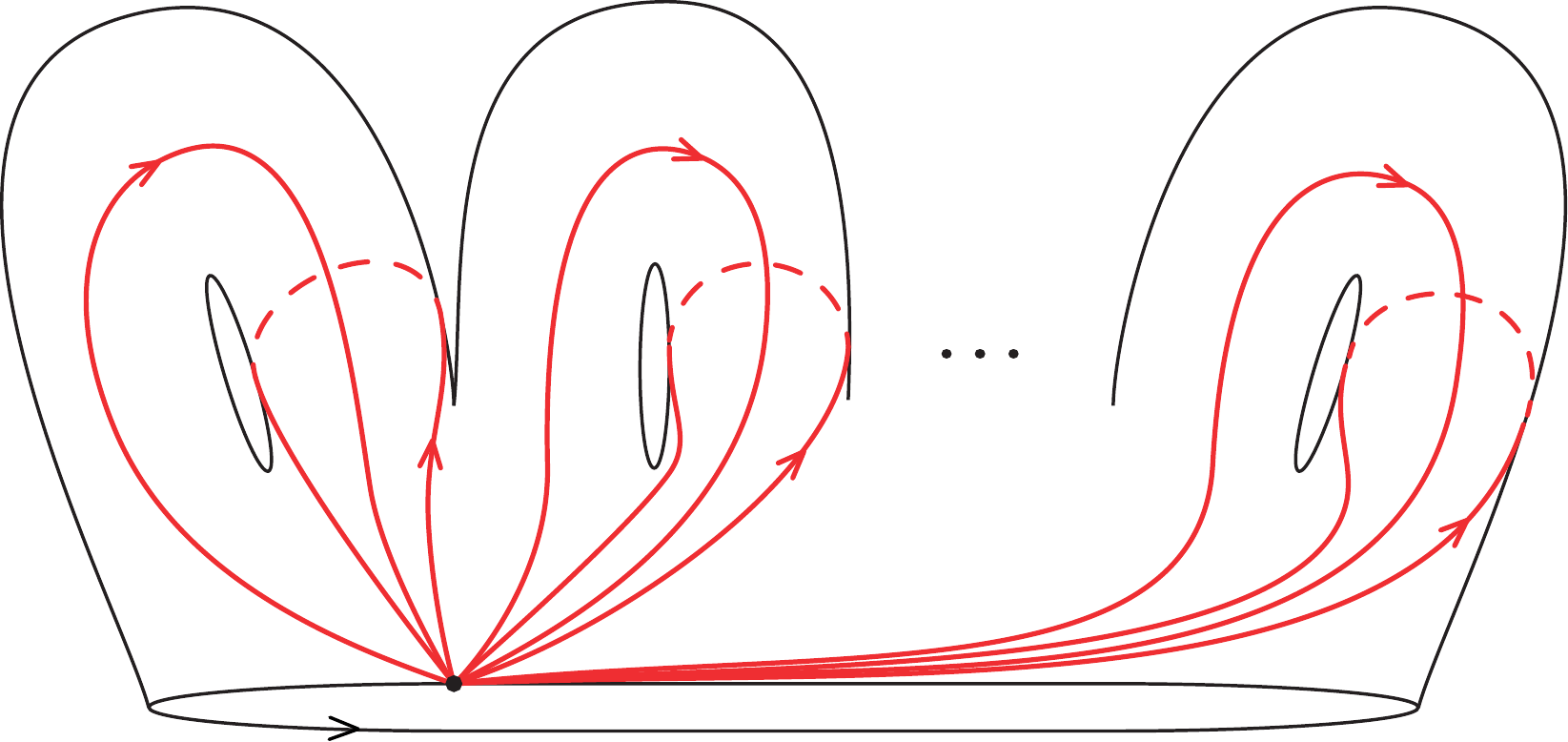}}
     \put(14, 24){$x_{1}$}\put(33, 12){$y_{1}$}
     \put(36, 24){$x_{2}$}\put(50, 12){$y_{2}$}
     \put(67, 24){$x_{g}$}\put(82, 12){$y_{g}$}
     \put(27, -4){$\zeta$}
    \end{picture}
   \end{center}
   \caption{symplectic generators of $\pi$.}
   \label{fig:symp_gene}
  \end{figure}
 %
 Putting $X_{i}=|x_{i}|$, $Y_{i}=|y_{i}|$ ($i=1,2,\dots,g$),
 we have a symplectic basis $\{X_{1},Y_{1},\dots, X_{g},Y_{g}\}$ of $H$,
 namely the basis satisfies
  $X_{i}{\cdot}Y_{i} = \delta_{ij}$,
  $X_{i}{\cdot}X_{j} = Y_{i}{\cdot}Y_{j} = 0$
  for $1 \leq i,j \leq g$,
 where
 $\cdot:H \times H \rightarrow \mathbb{Q}$ is the intersection form on $H$.

 With respect to the symplectic generators of $\pi$,
 we define a map $\ell:\pi \rightarrow H\wedge H$ by the following three rules:
 \begin{enumerate}[i)]
   \item $\ell(1)=0$,
   \item $\ell(x_{i})=\frac{1}{2}X_{i}\wedge Y_{i}$,
	 $\ell(y_{i})=-\frac{1}{2}X_{i}\wedge Y_{i}$, ($i=1,2,\dots,g$),
   \item For $\forall g,h \in \pi$, $\ell(gh)=\ell(g)+\ell(h)+\frac{1}{2}|g|\wedge |h|$.
 \end{enumerate}
 Note that we obtain $\ell(g^{-1})=-\ell(g)$ for $\forall g \in \pi$ from i) and iii).

 We consider that $H\wedge H$ acts on $H$ as follows:
 For $X \wedge Y \in H\wedge H$ and $Z \in H$,
 \begin{equation*}\label{eqn:ell}
  (X\wedge Y)(Z) := (Z{\cdot}X)Y-(Z{\cdot}Y)X.
 \end{equation*}

%
 \begin{thm}\label{thm:main}
  Let $\alpha$ and $\beta$ be oriented simple closed curves on $\Sigma$
  with $i_{A}(\alpha, \beta)=0$.
  If
  $\ell(a)(|b|)+\ell(b)(|a|)$ is not an element in $\mathbb{Z}|a|+\mathbb{Z}|b| \subset H$,
  where $a$ and $b$ are based homotopy classes of based loops
  freely homotopic to $\alpha$ and $\beta$ respectively,
  then $i_{G}(\alpha, \beta) > 0$.
 \end{thm}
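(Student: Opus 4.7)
My plan is to establish the contrapositive: assuming $i_G(\alpha,\beta)=0$, I will show that $\ell(a)(|b|)+\ell(b)(|a|) \in \mathbb{Z}|a|+\mathbb{Z}|b|$ for any choice of representatives. The hypothesis supplies, as recalled in the introduction, based homotopy classes $a,b \in \pi$ representing the free homotopy classes of $\alpha$ and $\beta$ with $t_\alpha(b)=b$ in $\pi$; all the subsequent work is to translate this single identity in the fundamental group into a homological relation in $H$. To do so I will fix a symplectic expansion $\theta:\pi \to \hat{T}(H)$ (recalled in \S\ref{sec:prelim}), apply it to $b=t_\alpha(b)$, and invoke the Kawazumi--Kuno formula, which rewrites $\theta(t_\alpha(b))$ as the image of $\theta(b)$ under an explicit automorphism of $\hat{T}(H)$ built from $\log\theta(\alpha)$.

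The identity $\theta(b)=\theta(t_\alpha(b))$ will then be truncated degree by degree up to degree $2$, using that for a symplectic expansion the degree-$2$ piece of $\theta(x)$ equals $\tfrac12|x|^{\otimes 2}+\ell(x)$ under the inclusion $H\wedge H \hookrightarrow H^{\otimes 2}$ as the antisymmetric part. In degree $1$ the equation reads $|b|=|b|+(|b|\cdot|a|)\,|a|$, which is vacuous because $i_A(\alpha,\beta)=0$ forces $|a|\cdot|b|=0$; the first nontrivial content therefore lives in degree $2$. Extracting the antisymmetric part of the degree-$2$ equation and interpreting it via the action of $H\wedge H$ on $H$ defined in \S\ref{subsec:main_theorem}, I will rearrange it into
\[
  \ell(a)(|b|) + \ell(b)(|a|) \;=\; m\,|a| + n\,|b|,
\]
with coefficients $m,n \in \mathbb{Z}$ read off from the combinatorial data entering Kawazumi--Kuno's formula. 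This is exactly the required conclusion.

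The main obstacle will be the degree-$2$ bookkeeping inside Kawazumi--Kuno's formula, which is an exponential of a symplectic derivation; truncating cleanly at degree $2$ requires tracking cross-terms between $\ell(a)$, $\ell(b)$ and tensor products in $H^{\otimes 2}$, and the most delicate point will be verifying that the coefficients appearing in front of $|a|$ and $|b|$ are genuinely integers, so that the final expression lies in the integer lattice $\mathbb{Z}|a|+\mathbb{Z}|b|$ and not merely in $\mathbb{Q}|a|+\mathbb{Q}|b|$. As a preliminary check, the conjugation identity $\ell(cac^{-1})=\ell(a)+|c|\wedge|a|$, an immediate consequence of rule iii) in the definition of $\ell$, combined with $|a|\cdot|b|=0$ shows that the class of $\ell(a)(|b|)+\ell(b)(|a|)$ in $H/(\mathbb{Z}|a|+\mathbb{Z}|b|)$ depends only on the free homotopy classes of $\alpha$ and $\beta$, confirming that the statement is intrinsic and independent of the choice of lifts.
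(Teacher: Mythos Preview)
Your overall architecture---contrapositive, apply a symplectic expansion, invoke Kawazumi--Kuno, truncate at degree~$2$, and use the conjugation formula to pass to arbitrary lifts---matches the paper's. But there is a genuine gap at the step you yourself flag as ``most delicate.''

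From the single relation $t_\alpha(b_0)=b_0$ the degree-$2$ truncation gives exactly the identity of Lemma~\ref{lem:theta_2},
\[
|a|\wedge\bigl(\ell(a)(|b|)+\ell(b_0)(|a|)\bigr)=0,
\]
which only forces the bracketed vector to lie in $\mathbb{Q}|a|$ (when $|a|\neq 0$). Nothing in the Kawazumi--Kuno formula at this level produces an integer coefficient on $|a|$ or any $|b|$-term; the best a priori bound coming from $\ell(\pi)\subset\tfrac12\wedge^2 H_{\mathbb Z}$ is that the vector lies in $\tfrac12 H_{\mathbb Z}\cap\mathbb{Q}|a|=\tfrac12\mathbb{Z}|a|$. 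Half-integers are not enough: in the worked example of \S 1 the value is $\tfrac12(|a|-|b|)$, so the distinction between $\tfrac12\mathbb{Z}$ and $\mathbb{Z}$ is precisely the content of the theorem.

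The paper closes this gap (in the main case where $\alpha,\beta$ are both non-separating with $|a|,|b|$ linearly independent) by using \emph{both} Dehn twists symmetrically: it also chooses $a_0$ with $t_\beta(a_0)=a_0$, obtaining a second equation $|b|\wedge(\cdots)=0$. Together these force $\ell(a_0)(|b|)+\ell(b_0)(|a|)=0$ on the nose; then your conjugation argument gives integer multiples of $|a|$ and $|b|$ from the intersection numbers $|b|\cdot|c|$ and $|a|\cdot|d|$. The remaining cases (one curve separating, or $|a|,|b|$ linearly dependent) are treated separately, and in the linearly dependent case the statement is in fact vacuous. Your plan needs the second Dehn-twist equation, and the case split, to go through.
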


 \begin{rmk}\label{rmk:uselessness}
  \begin{enumerate}[(1)]
   \item When both of the curves are separating on $\Sigma$,
	 the theorem is trivial 
	 since
	 $\ell(a)(|b|)+\ell(b)(|a|)=0 \in \mathbb{Z}|a|+\mathbb{Z}|b|$.
   \item In the case where both of the curves are non-separating on $\Sigma$,
	 if $|a|$ and $|b|$ are linearly dependent on $H$,
	 the theorem is also trivial, 
	 namely
	 it is always true that $\ell(a)(|b|)+\ell(b)(|a|) \in \mathbb{Z}|a|+\mathbb{Z}|b|$.
	 See Theorem \ref{thm:non-sep-2} in \S \ref{sec:proof}.
  \end{enumerate}
 \end{rmk}

\subsection{Example of computations in Theorem \ref{thm:main}}

  Let $\alpha$ and $\beta$ be oriented simple closed curves on $\Sigma_{2,1}$ as shown in
  the left part of Figure \ref{fig:example}.
  Taking symplectic generators of $\pi$ as in the right of the figure,
  we may choose elements $a=x_{1}x_{2}y_{2}x_{2}^{-1}$ and $b=y_{2}x_{1}^{-1}$ in $\pi$,
  which are based homotopy classes of based loops freely homotopic to $\alpha$ and $\beta$ respectively.
  %
  %
  We can confirm that the homology classes $|a|=X_{1}+Y_{2}$ and $|b|=-X_{1}+Y_{2}$ meet $|a|{\cdot}|b| = 0$.
  \begin{figure}[h]
   \begin{center}\unitlength=1mm
    \begin{picture}(100,35)(0,-2)
     \footnotesize
     \put(0,-3){\includegraphics[height=33mm]{%
     ./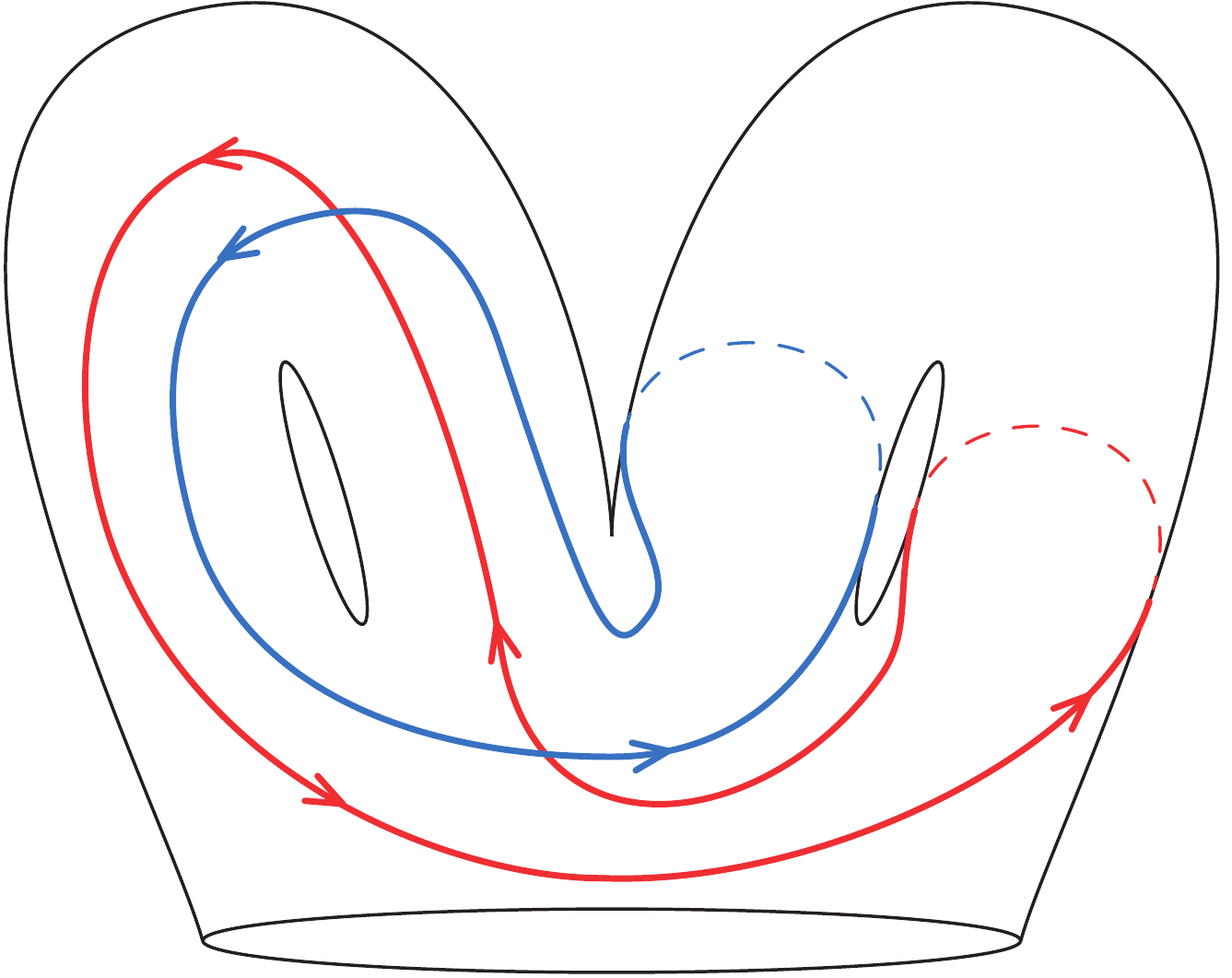}}
     \put(57,-3){\includegraphics[height=33mm]{%
     ./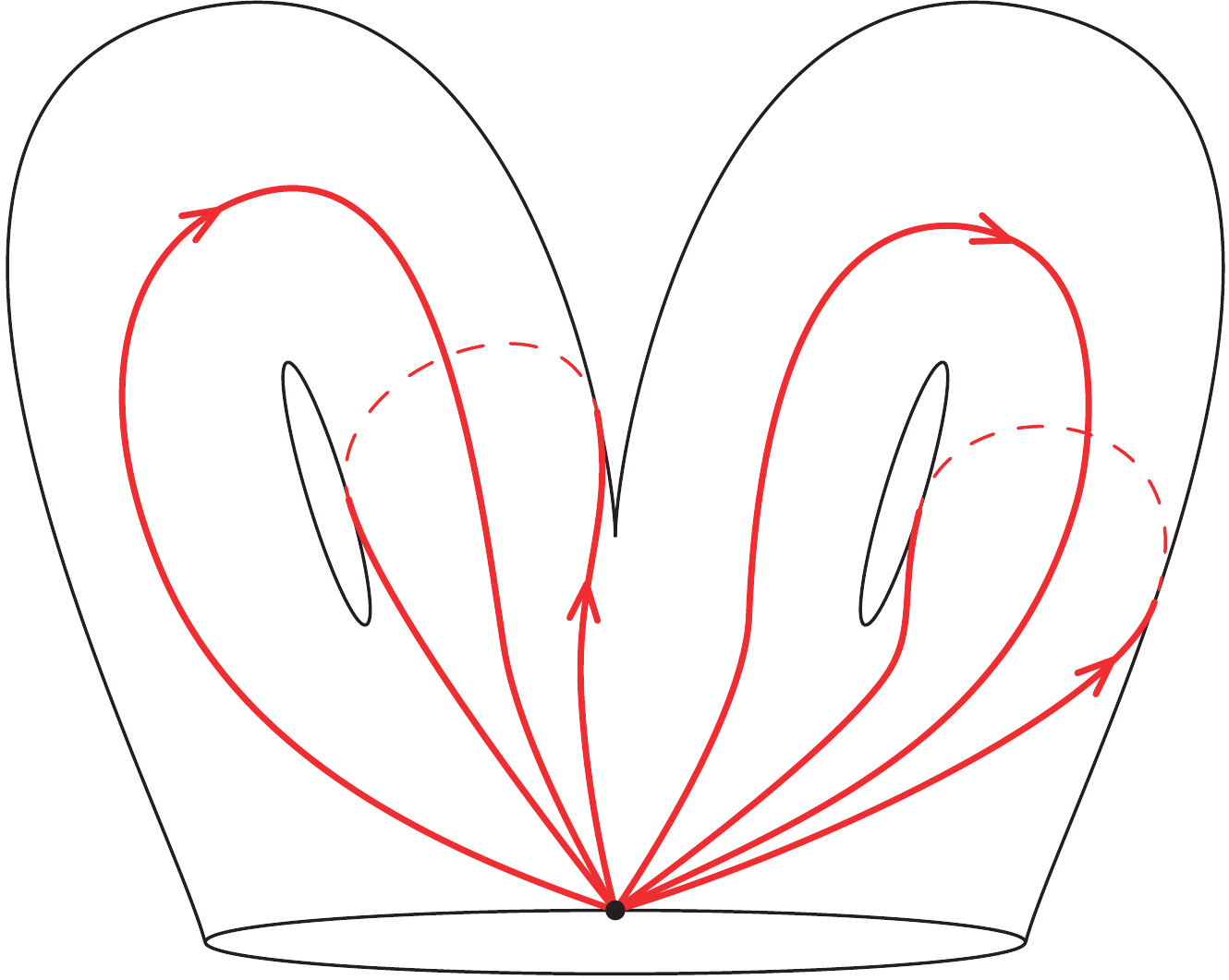}}
     \put(23,7){$\alpha$}\put(34,7){$\beta$}
     \put(59,24){$x_{1}$}\put(77,9){$y_{1}$}
     \put(85,24){$x_{2}$}\put(97,9){$y_{2}$}
     \put(77,-3){$p$}
    \end{picture}
   \end{center}
   \caption{Oriented curves on $\Sigma_{2,1}$ and symplectic generators of $\pi$.}
   \label{fig:example}
  \end{figure}

 %
 %
 We now compute $\ell(a)$ and $\ell(b)$.
 %
 In addition to
 the property of the map $\ell$ mentioned above,
 we will use the following.
 \begin{equation*}
  \ell(ghg^{-1})=\ell(h)+|g|\wedge|h|,~ \forall g,h \in \pi.
 \end{equation*}
 (See Proposition \ref{prop:ell2} for a proof).
 \begin{eqnarray*}
  \ell(a)
   &=& \ell(x_{1})+\ell(x_{2}y_{2}x_{2}^{-1})
   +\frac{1}{2}X_{1}\wedge Y_{2} \\
  &=& \frac{1}{2}
   \left(
    X_{1}\wedge Y_{1}
    +X_{2}\wedge Y_{2}
    +X_{1}\wedge Y_{2}
   \right), \\
  \ell(b)
  &=& \ell(y_{2})-\ell(x_{1})-\frac{1}{2}Y_{2}\wedge X_{1} \\
  &=& \frac{1}{2}
   \left(
    -X_{2}\wedge Y_{2}
    -X_{1}\wedge Y_{1}
    -Y_{2}\wedge X_{1}
   \right). 
 \end{eqnarray*}
 Elements $\ell(a)$ and $\ell(b)$ in $H \wedge H$
 act on $|b|$ and $|a|$ respectively as follows;
 \begin{eqnarray*}
  \ell(a)|b|
   &=& \frac{1}{2}
   \left(
    X_{1}\wedge Y_{1}
    +X_{2}\wedge Y_{2}
    +X_{1}\wedge Y_{2}
   \right)(-X_{1}+Y_{2})
   ~=~
   \frac{1}{2}(X_{1}-Y_{2}), \\
  \ell(b)|a|
  &=& \frac{1}{2}
  \left(
   -X_{2}\wedge Y_{2}
   -X_{1}\wedge Y_{1}
   -Y_{2}\wedge X_{1}
  \right)(X_{1}+Y_{2})
  ~=~ \frac{1}{2}(X_{1}+Y_{2}).
 \end{eqnarray*}
 Now we have that
\begin{equation*}
 \ell(a)|b|+\ell(b)|a|
  = X_{1}
  = \frac{|a|-|b|}{2} \not\in \mathbb{Z}|a|+\mathbb{Z}|b|,
\end{equation*}
and see from Theorem \ref{thm:main} that $i_{G}(\alpha, \beta) > 0$.

\begin{rmk}
 The converse of Theorem \ref{thm:main} does not hold.
 The following is a counter-example:
 We set $\Sigma = \Sigma_{2,1}$.
 The symplectic generators of $\pi$ as above.
 Let $\alpha$ be an oriented simple closed curve on $\Sigma$ freely homotopic to
 $a=x_{1}$.
 Take a word $b'=x_{2}^{-1} \in \pi$.
 It is easy to see that
 $\ell(a)|b'|+\ell(b')|a| = 0$.
 We now consider a word $b=b'[y_{1},\zeta]\zeta$,
 where $\zeta=[x_{1},y_{1}][x_{2},y_{2}]$, the homotopy class of the boundary loop.
 We can confirm that $b$
 has a representative of a simple loop, say $\beta$,
 and $i_{G}(\alpha, \beta) = 2$ by the bigon criterion.
 Setting $\omega := \ell(\zeta) = X_{1}\wedge Y_{1} + X_{2}\wedge Y_{2}$,
 we have that $\ell(b)=\ell(b')+\omega$
 (see Proposition \ref{prop:ell2}).
 From facts that $|b|=|b'|$ and $\omega(|a|)=-|a| \in \mathbb{Z}|a|$ (see Proposition \ref{prop:omega|a|}),
 we obtain
 \begin{equation*}
  \ell(a)|b|+\ell(b)|a|
   = \ell(a)|b'|+\ell(b')|a|+\omega(|a|)
   = -|a| \in \mathbb{Z}|a| \subset \mathbb{Z}|a|+\mathbb{Z}|b|.
 \end{equation*}
\end{rmk}

\section*{Acknowledgments}\label{sec:ack}
The author would like to thank Yasushi Kasahara and Nariya Kawazumi
for valuable comments.

\section{Preliminaries}\label{sec:prelim}

In this section we recall some of notions in the theory about the mapping class group of a
surface developed in \cite{Ka}, \cite{M} and \cite{KK},
and prepare propositions, most of all are with a proof, for use in \S \ref{sec:proof}.

We denote by $\widehat{T}$ the completed tensor algebra generated by $H$,
i.e., $\widehat{T} = \prod_{m=0}^{\infty}H^{\otimes m}$.
We will write the tensor product of $u$ and $v$ in $\widehat{T}$
as $uv$ omitting the tensor symbol.
We set $\widehat{T}_{p} = \prod_{m \geq p}^{\infty}H^{\otimes m}$ for $p \geq 1$,
which gives a filtration of $\widehat{T}$.


\subsection{Magnus expansion of $\pi$}
The subset $1+\widehat{T}_{1}$ of $\widehat{T}$ has a group structure by the tensor product of $\widehat{T}$.
\begin{dfn}[Kawazumi, \cite{Ka}]
 A Magnus expansion $\theta$ of $\pi$ is a group homomorphism from $\pi$ to $1+\widehat{T}_{1}$ satisfying
 \begin{equation*}
  \theta(x) \equiv 1+|x| ~(\mod{\widehat{T}_{2}}) ~\text{for } \forall x \in \pi.
 \end{equation*}
\end{dfn}
For a Magnus expansion $\theta$, we set $\ell^{\theta} = \log \circ\, \theta$,
where the logarithm map is defined as
\begin{equation*}
 \log(u) := \sum_{k=1}^{\infty} \frac{(-1)^{k-1}}{k}(u-1)^{k} ~\text{for }\forall u \in 1+\widehat{T}_{1}.
\end{equation*}
We denote by $\widehat{\mathcal{L}}$
the completed free Lie algebra generated by $H$ with the bracket $[u,v] = uv-vu$
and set $\mathcal{L}_{p}=\widehat{\mathcal{L}} \cap H^{\otimes p}$ for $p \geq 1$.
%
\begin{dfn}
 A group-like expansion of $\pi$ is a Magnus expansion $\theta$ of $\pi$ satisfying
 \begin{equation*}
  \ell^{\theta}(\pi) \subset \widehat{\mathcal{L}}.
 \end{equation*}
\end{dfn}

We take symplectic generators $\{x_{j}, y_{j}\}_{1 \leq j \leq g}$ of $\pi$
as shown in Figure \ref{fig:symp_gene}.
Let $\zeta$ be a homotopy class of the boundary loop of $\Sigma$
as in the same figure,
which is expressed in the word of the symplectic generators as
$\zeta = \prod_{j=1}^{g}[x_{j},y_{j}]$.
Set $\omega=\sum_{j=1}^{g} [X_{j}, Y_{j}] \in \mathcal{L}_{2}$,
where $X_{j} = |x_{j}|$ and $Y_{j} = |y_{j}|$ ($j=1,2,\dots,g$).
%
\begin{dfn}[Massuyeau, \cite{M}]
 A symplectic expansion of $\pi$ is a group-like expansion $\theta$ of $\pi$ satisfying
 \begin{equation*}
  \ell^{\theta}(\zeta) = \omega.
 \end{equation*}
\end{dfn}

 \subsection{The logarithm of Dehn twists}
 Let $\operatorname{Aut}(\widehat{T})$ be the group of the filter-preserving algebra automorphisms
 of $\widehat{T}$,
 and $\operatorname{Der}(\widehat{T})$ the space of the derivations of $\widehat{T}$.
 We may identify $\widehat{T}_{1}$ with $\operatorname{Der}(\widehat{T})$ as follows:

 By the Poincar\'{e} duality $\widehat{T}_{1}=H\otimes \widehat{T}$ is identified with
 $\operatorname{Hom}(H, \widehat{T})$,
 namely, for $X_{1}X_{2}\cdots X_{k} \in H^{\otimes k}$ and $Y \in H$ we define
 \begin{equation}\label{111614_8Sep16}
  (X_{1}X_{2}\cdots X_{k})(Y) = (Y{\cdot}X_{1})X_{2}\cdots X_{k}.
 \end{equation}
 Since each $h \in \operatorname{Hom}(H, \widehat{T})$ determines
 a derivation $D_{h} \in \operatorname{Der}(\widehat{T})$ by
 \begin{equation*}
  D_{h}(Y_{1}Y_{2}\cdots Y_{p})
   = h(Y_{1})Y_{2}\cdots Y_{p}+Y_{1}h(Y_{2})\cdots Y_{p}+\cdots+Y_{1}Y_{2}\cdots h(Y_{k})
 \end{equation*}
 and vise versa,
 we have an identification of $\widehat{T}_{1}$ with $\operatorname{Der}(\widehat{T})$.


We denote by $\operatorname{Aut}(\pi)$ the automorphism group of $\pi$.
%
  For $\phi \in \operatorname{Aut}(\pi)$
 An automorphism $T^{\theta}(\phi) \in \operatorname{Aut}(\widehat{T})$
 is called the total Johnson map of $\phi$
 associated to a Magnus expansion $\theta$,
 if it satisfies
\begin{equation*}
 T^{\theta}(\phi)\circ\theta = \theta\circ\phi.
\end{equation*}
 It is known \cite[\S 2.5]{Ka} that
 there uniquely exists a total Johnson map for each $\phi \in \operatorname{Aut}(\pi)$.

 We denote by $t_{\alpha}$ a right-handed Dehn twist along a simple closed curve $\alpha$ on $\Sigma$.
 In \cite{KK}, N.\ Kawazumi and Y.\ Kuno gave an explicit description of the total Johnson map
 of $t_{\alpha}$ with respect to a symplectic expansion $\theta$ as follows:

 Let $\nu$ be a linear map on $\widehat{T}$, called the cyclic permutation,
 defined by
 \begin{equation*}
  \nu(X_{1}X_{2}\cdots X_{k}) = X_{2}\cdots X_{k}X_{1} ~~(X_{j} \in H, j=1,2,\dots,k),
 \end{equation*}
 and $N$ a linear map on $\widehat{T}$ defined by
 \begin{equation*}
  N|_{H^{\otimes k}} = \sum_{j=0}^{k-1} \nu^{j},
 \end{equation*}
 for $k \geq 1$, and $N|_{\mathbb{Q}} = 0$.
%
%
  Kawazumi and Kuno \cite[\S 2.6]{KK} introduced a map $L^{\theta}: \pi \rightarrow \widehat{T}_{2}$
  with respect to a Magnus expansion $\theta$
  defined by
  \begin{equation*}
   L^{\theta}(x) = \frac{1}{2}N(\ell^{\theta}(x)\ell^{\theta}(x)),~ [x \in \pi],
  \end{equation*}
  and they showed that $L^{\theta}(x^{-1}) = L^{\theta}(x)$ and
  $L^{\theta}(yxy^{-1}) = L^{\theta}(x)$ for any $x,y \in \pi$.

  Let $\alpha$ be a simple closed curve on $\Sigma$.
  The value $L^{\theta}(a) \in \widehat{T}_{2}$ is well-defined
  because of the property of $L^{\theta}$ mentioned above.
  With respect to a symplectic expansion $\theta$,
  an explicit description of the total Johnson map of $t_{a}$
  in the view of the identification
  $\widehat{T}_{1}=\operatorname{Der}(\widehat{T})$
  is given \cite[Theorem 1.1.1]{KK} as
  \begin{equation}\label{130101_27Sep16}
   T^{\theta}(t_{a}) = e^{-L^{\theta}(a)},
  \end{equation}
  namely, for $\forall x \in \pi$
  \begin{equation*}
  \theta(t_{a}(x)) = e^{-L^{\theta}(a)}(\theta(x)),
  \end{equation*}
  where the exponential map on $\widehat{T}$ is defined as
  $e^{u}:=\sum_{k=0}^{\infty} \frac{u^{k}}{k!}$ for $u \in \widehat{T}$.


  \subsection{Degree $2$-part and $3$-part of $L^{\theta}(x)$}
 In $H^{\otimes 2}$, we will identify $\mathcal{L}_{2}$ with $H \wedge H$
 by identifying $[X,Y] = XY-YX$ with $X \wedge Y$ for $X,Y \in H$.
 In $H^{\otimes 3}$, we will regard $\wedge^{3}H$ as a subspace of $H^{\otimes 3}$
 by identifying $X{\wedge}Y{\wedge}Z$ with $XYZ+YZX+ZXY-XZY-ZYX-YXZ$.
 Thus an action of $H \wedge H$ and of $\wedge^{3}H$ on $H$
 are given as follows.
  \begin{prop}\label{prop:actions}\normalfont
   For $X, Y, Z \in H$ and $u \in H \wedge H = \mathcal{L}_{2}$,
   \begin{enumerate}[(1)]
    \item  $(X \wedge Y)(Z) = (Z{\cdot}X)Y - (Z{\cdot}Y)X$,
    \item  $(X \wedge u)(Z) = (Z{\cdot}X)u - X \wedge (u(Z))$.
   \end{enumerate}
  \end{prop}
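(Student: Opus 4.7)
The plan is to verify both formulas by direct unfolding of the identifications: first translate the wedge into the tensor algebra via $X\wedge Y = XY - YX$ (and the analogous six-term expression for $X\wedge Y\wedge Z$), then apply the definition $(X_{1}X_{2}\cdots X_{k})(Y) = (Y{\cdot}X_{1})X_{2}\cdots X_{k}$ from (\ref{111614_8Sep16}), and finally re-collect the resulting terms as wedges again.

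For part (1), I would simply compute
\begin{equation*}
 (X\wedge Y)(Z) = (XY - YX)(Z) = (Z{\cdot}X)Y - (Z{\cdot}Y)X,
\end{equation*}
which is immediate from the definition in (\ref{111614_8Sep16}).

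For part (2), by bilinearity it suffices to treat $u = Y\wedge Z$. Using the given embedding $\wedge^{3}H \hookrightarrow H^{\otimes 3}$, I would expand
\begin{equation*}
 X\wedge Y\wedge Z = XYZ + YZX + ZXY - XZY - ZYX - YXZ
\end{equation*}
and apply the tensor-to-map rule to $W\in H$ to obtain
\begin{equation*}
 (X\wedge Y\wedge Z)(W) = (W{\cdot}X)(Y\wedge Z) + (W{\cdot}Y)(Z\wedge X) + (W{\cdot}Z)(X\wedge Y).
\end{equation*}
Then I would compute the right-hand side of (2): since $u(W) = (W{\cdot}Y)Z - (W{\cdot}Z)Y$ by part (1),
\begin{equation*}
 (W{\cdot}X)u - X\wedge u(W) = (W{\cdot}X)(Y\wedge Z) - (W{\cdot}Y)(X\wedge Z) + (W{\cdot}Z)(X\wedge Y),
\end{equation*}
and using $Z\wedge X = -X\wedge Z$ the two expressions agree.

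There is no real obstacle here; the only subtlety is bookkeeping the six signs in the wedge expansion and the antisymmetry $Z\wedge X = -X\wedge Z$ needed to match the two sides of (2). The formula extends from the case $u = Y\wedge Z$ to arbitrary $u\in H\wedge H$ by linearity in the second slot of the wedge.
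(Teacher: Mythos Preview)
Your proposal is correct and follows essentially the same route as the paper: reduce part~(2) by linearity to a simple wedge, expand the six-term tensor representative, apply the rule from (\ref{111614_8Sep16}), and regroup into wedges. The only cosmetic difference is that the paper uses fresh letters $A,B$ for the factors of $u$ and keeps $Z$ as the argument, whereas you recycle $Y,Z$ for the factors and introduce $W$ as the argument; this is harmless but slightly clashes with the variable names fixed in the statement.
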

  \begin{proof}
   \begin{enumerate}[(1)]
    \item Computing the action of $X\wedge Y=[X,Y]=XY-YX$ on $H$ as (\ref{111614_8Sep16}),
	  we have
	  \begin{equation*}
	  (X\wedge Y)(Z)
	  = (XY)(Z) - (YX)(Z)
	  = (Z{\cdot}X)Y - (Z{\cdot}Y)X.
	  \end{equation*}
    \item It is enough to show the equation for $u=A \wedge B$, [$A, B \in H$].
	  %
	  \begin{eqnarray*}
	   (X\wedge (A \wedge B))(Z)
	    &=& (XAB+ABX+BXA-XBA-BAX-AXB)(Z) \\
	   &=& (Z{\cdot}X)A\wedge B-X \wedge \left\{
					      (Z{\cdot}A)B +(Z{\cdot}B)A
					     \right\} \\
	   &=& (Z{\cdot}X)A\wedge B-X \wedge \left\{
					      (A \wedge B)(Z)
					     \right\}. 
	  \end{eqnarray*}
   \end{enumerate}
  \end{proof}
  For a Magnus expansion $\theta$,
  we denote the degree $k$-part of $\ell^{\theta}(x)$ and $L^{\theta}(x)$
  by $\ell_{k}^{\theta}(x)$ and $L_{k}^{\theta}(x)$ respectively ($x \in \pi$).
  %
  %
  \begin{prop}\label{prop:derivations}\normalfont
   Let $\theta$ be a Magnus expansion.
   For $\forall a \in \pi$ and $\forall u \in \mathcal{L}_{2}$,
   \begin{enumerate}[(1)]
    \item $L^{\theta}_{2}(a)(u)=-|a| \wedge (u(|a|))$,
    \item $L^{\theta}_{2}(a)^{2}(u)=0$.
   \end{enumerate}
  \end{prop}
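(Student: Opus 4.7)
The plan is to first unpack $L^{\theta}_{2}(a)$ as a concrete element of $H^{\otimes 2}$, then translate it into a derivation of $\widehat{T}$ via the identification $\widehat{T}_{1} = \operatorname{Der}(\widehat{T})$, and finally verify both identities by direct application of the Leibniz rule.

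First I would extract $L^{\theta}_{2}(a)$. Since any Magnus expansion satisfies $\ell^{\theta}(a) \equiv |a|$ modulo $\widehat{T}_{2}$, the degree-$2$ component of $\ell^{\theta}(a)\ell^{\theta}(a)$ is simply $|a||a|$. Because the cyclic permutation $\nu$ fixes $|a||a| \in H^{\otimes 2}$, we have $N(|a||a|) = 2|a||a|$, and therefore $L^{\theta}_{2}(a) = |a||a|$. Under the Poincar\'{e}-duality identification (\ref{111614_8Sep16}), this element corresponds to the derivation $D$ of $\widehat{T}$ acting on $H$ by $D(Y) = (Y{\cdot}|a|)|a|$.

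For (1), by bilinearity it suffices to check the identity on a pure bracket $u = X \wedge Y = XY - YX$. Expanding $D(XY) - D(YX)$ using the Leibniz rule and regrouping the four resulting monomials by their scalar coefficients yields
\begin{equation*}
(X{\cdot}|a|)(|a|\wedge Y) - (Y{\cdot}|a|)(|a|\wedge X) = |a|\wedge\bigl\{(X{\cdot}|a|)Y - (Y{\cdot}|a|)X\bigr\}.
\end{equation*}
Applying Proposition \ref{prop:actions}(1) and using the antisymmetry of the intersection form, the inner factor is recognized as $-(X\wedge Y)(|a|) = -u(|a|)$, which gives the claimed formula.

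For (2), I would apply $D$ once more to $-|a|\wedge u(|a|)$ obtained in (1). Writing $W := u(|a|) \in H$ and expanding $|a|\wedge W = |a|W - W|a|$, the Leibniz rule yields four terms. Two of them involve $D(|a|) = (|a|{\cdot}|a|)|a|$, which vanishes by the antisymmetry of the intersection form; the remaining two are both equal to $(W{\cdot}|a|)|a||a|$ but enter with opposite signs and therefore cancel. The main obstacle throughout is purely sign bookkeeping; the only nontrivial input is the formula $L^{\theta}_{2}(a) = |a||a|$ together with the Poincar\'{e}-duality description of its associated derivation.
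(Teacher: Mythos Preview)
Your argument is correct and follows essentially the same route as the paper: both compute $L^{\theta}_{2}(a)=|a||a|$, apply the associated derivation to a pure bracket $X\wedge Y$ via the Leibniz rule, and then for (2) use $D(|a|)=0$ together with the cancellation of the two surviving terms. Your write-up is in fact slightly more careful than the paper's, since you explicitly justify the factor $\tfrac{1}{2}N(|a||a|)=|a||a|$ and you keep track of the sign when passing from $(X{\cdot}|a|)Y-(Y{\cdot}|a|)X$ to $-(X\wedge Y)(|a|)$ via the antisymmetry of the pairing, whereas the paper's displayed proof of (1) drops that minus sign in its final line.
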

  \begin{proof}
   For $a \in \pi$,
   $L_{2}^{\theta}(a) = \ell^{\theta}_{1}(a)\ell^{\theta}_{1}(a) = |a||a|$.
   \begin{enumerate}[(1)]
    \item 
	  For $X{\wedge}Y \in H{\wedge}H$ we have
	 \begin{eqnarray*}
	  L_{2}(a)(X \wedge Y)
	   &=& |a||a|(XY-YX) \\
	  &=& (X{\cdot}|a|)|a|\wedge Y-(Y{\cdot}|a|)|a|\wedge X \\
	  &=& |a|\wedge \left\{ (X\wedge Y)(|a|) \right\}.
	 \end{eqnarray*}
	  %
	  %
    \item For $\forall u \in \mathcal{L}_{2}$,
	  putting $Y := u(|a|)$, we have
	  \begin{equation*}
	   L^{\theta}_{2}(a)^{2}(u)
	    = |a||a|\left( |a|\wedge Y \right)
	    = 0+|a|(Y{\cdot}|a|)|a| - (Y{\cdot}|a|)|a||a| - 0
	    = 0.
	  \end{equation*}
   \end{enumerate}
  \end{proof}

  When $\theta$ is an group-like expansion,
   the map $\ell^{\theta}_{2}$ sends $\forall x \in\pi$ into $\mathcal{L}_{2} = H\wedge H$.
  %
  \begin{prop}[\cite{KK}, Lemma 6.4.1.]\label{prop:L_{3}}
   For a group-like expansion $\theta$ and for $a \in \pi$,
  \begin{equation*}
   L_{3}^{\theta}(a) = |a|\wedge \ell^{\theta}_{2}(a) \in \wedge^{3}H.
  \end{equation*}
  \end{prop}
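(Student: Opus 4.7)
The plan is to unpack the definition $L^{\theta}(a)=\frac{1}{2}N(\ell^{\theta}(a)\ell^{\theta}(a))$ and pick off the degree-$3$ component, then identify the result with $|a|\wedge\ell^{\theta}_{2}(a)$ under the embedding $\wedge^{3}H\hookrightarrow H^{\otimes 3}$ described just before Proposition~\ref{prop:actions}.

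First I would use that $\theta$ is group-like: this forces $\ell^{\theta}_{1}(a)=|a|\in H=\mathcal{L}_{1}$ and $\ell^{\theta}_{2}(a)\in\mathcal{L}_{2}=H\wedge H$. Writing $\ell^{\theta}(a)\ell^{\theta}(a)=\bigl(\sum_{k\ge 1}\ell^{\theta}_{k}(a)\bigr)^{2}$ and extracting the degree-$3$ component gives
\begin{equation*}
\bigl(\ell^{\theta}(a)\ell^{\theta}(a)\bigr)_{3}=|a|\,\ell^{\theta}_{2}(a)+\ell^{\theta}_{2}(a)\,|a|,
\end{equation*}
so that $L^{\theta}_{3}(a)=\frac{1}{2}N\bigl(|a|\,\ell^{\theta}_{2}(a)+\ell^{\theta}_{2}(a)\,|a|\bigr)$, where $N|_{H^{\otimes 3}}=1+\nu+\nu^{2}$.

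Next I would reduce to a rank-one computation. Since $\mathcal{L}_{2}=H\wedge H$, it suffices by linearity to treat $\ell^{\theta}_{2}(a)=X\wedge Y=XY-YX$ for $X,Y\in H$ and afterwards substitute $\ell^{\theta}_{2}(a)$ back in. A direct application of $1+\nu+\nu^{2}$ to the four tensors $|a|XY,\ |a|YX,\ XY|a|,\ YX|a|$ yields, after collecting signs,
\begin{equation*}
N\bigl(|a|(XY-YX)\bigr)=N\bigl((XY-YX)|a|\bigr)=|a|XY+XY|a|+Y|a|X-|a|YX-YX|a|-X|a|Y,
\end{equation*}
and the right-hand side is precisely the image of $|a|\wedge X\wedge Y$ under the identification $\wedge^{3}H\subset H^{\otimes 3}$ recalled in \S 2.3. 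Hence the two $N$-terms coincide, the factor $\frac{1}{2}$ cancels the doubling, and one gets $L^{\theta}_{3}(a)=|a|\wedge(X\wedge Y)$, which by linearity upgrades to $|a|\wedge\ell^{\theta}_{2}(a)$.

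The only mildly subtle point, and what I expect to be the main bookkeeping obstacle, is making sure that the two cyclic sums $N(|a|\,\ell^{\theta}_{2}(a))$ and $N(\ell^{\theta}_{2}(a)\,|a|)$ really are equal (so that the symmetrization $\frac{1}{2}$ disappears cleanly) and that the resulting six-term alternating sum agrees on the nose with the convention $X\wedge Y\wedge Z\mapsto XYZ+YZX+ZXY-XZY-ZYX-YXZ$ fixed in \S 2.3. Both are immediate from the cyclic invariance of $N$ and the antisymmetry of $X\wedge Y$, but the sign check is the one place where an error would be easy; once it is verified, the proposition follows.
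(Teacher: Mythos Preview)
Your argument is correct. The paper itself does not give a proof of this proposition but simply cites \cite[Lemma~6.4.1]{KK}; your direct computation via $N=1+\nu+\nu^{2}$ on $H^{\otimes 3}$ is the natural one, and the two points you flag are immediate: since $\nu\bigl(|a|(XY-YX)\bigr)=(XY-YX)|a|$ and $N\circ\nu=N$ on $H^{\otimes 3}$, the two cyclic sums agree, and the resulting six-term expression matches the convention for $|a|\wedge X\wedge Y$ fixed in \S\ref{sec:prelim} on the nose.
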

  %
%
 From the proposition above and Proposition \ref{prop:actions}
 we can compute the value $L_{3}^{\theta}(a)(X)$ for $X \in H$ as
   \begin{equation*}\label{133254_20Jul16}
    L_{3}^{\theta}(a)(X)
   =(X{\cdot}|a|)\ell^{\theta}_{2}(a)
   -|c|{\wedge}\left\{
		\ell^{\theta}_{2}(a)(X)
	       \right\}.
   \end{equation*}
%
For $\phi \in \operatorname{Aut}(\pi)$,
the map defined by
	\begin{equation*}
	 \tau^{\theta}(\phi) := T^{\theta}(\phi) \circ |\phi|^{-1}
	\end{equation*}
 is called the Johnson map of $\phi$ with respect to a Magnus expansion $\theta$.
 We denote by $\tau_{k}^{\theta}(\phi)(\gamma)$
 the $(k+1)$-th degree part of $\tau^{\theta}(\phi)(\gamma)$ for $\gamma \in \pi$,
 namely
	\[
	\tau^{\theta}(\phi)|_{H}
	= \operatorname{id}_{H}+\sum_{k=1}^{\infty}\tau^{\theta}_{k}(\phi).
	\]
	%

The following proposition proven in \cite{KK} is deduced from (\ref{130101_27Sep16}).
 \begin{prop}[\cite{KK},Theorem 6.5.3.]\label{prop:nonsepa-tau}
	 For a non-separating simple closed curve $\alpha$ on $\Sigma$
	 and a symplectic expansion $\theta$,
  \begin{equation*}
   \tau^{\theta}_{1}(t_{\alpha}) = -L_{3}^{\theta}(\alpha).
  \end{equation*}
 \end{prop}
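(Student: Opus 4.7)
The plan is to compute the degree-$2$ component of $\tau^{\theta}(t_{\alpha})|_{H} = T^{\theta}(t_{\alpha}) \circ |t_{\alpha}|^{-1}$ directly from the Kawazumi-Kuno formula (\ref{130101_27Sep16}), namely $T^{\theta}(t_{\alpha}) = e^{-L^{\theta}(a)}$, where $a$ is a based loop freely homotopic to $\alpha$. The degree-$1$ part of $T^{\theta}(t_{\alpha})(Y)$ for $Y\in H$ is
$(1 - L_{2}^{\theta}(a))(Y) = Y - (Y{\cdot}|a|)|a|$
(higher powers of $L_{2}^{\theta}(a)$ vanish on $H$ because $|a|{\cdot}|a|=0$), so this equals $|t_{\alpha}|(Y)$ and hence $|t_{\alpha}|^{-1}(X) = X + (X{\cdot}|a|)|a|$. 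The non-separating hypothesis enters only to guarantee $|a|\neq 0$ in this transvection formula.

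The first step is to extract the degree-$2$ part of $T^{\theta}(t_{\alpha})(Y)$ for generic $Y\in H$. Decomposing $L^{\theta}(a)=L_{2}+L_{3}+L_{4}+\cdots$ into homogeneous derivations, the derivation $L_{j}^{\theta}(a)$ raises tensor degree by $j-2$. A simple degree count then forces any term in $L^{k}(Y)$ that lands in degree $2$ to be a composition of exactly one $L_{3}$ and $k-1$ copies of $L_{2}$. Since $L_{2}^{\theta}(a)^{2}=0$ both on $H$ (direct) and on $\mathcal{L}_{2}$ (Proposition \ref{prop:derivations}(2)), the sum truncates to $k\le 3$ and gives
\[
 \bigl[T^{\theta}(t_{\alpha})(Y)\bigr]_{2}
 = -L_{3}(Y) + \tfrac{1}{2}\bigl(L_{2}L_{3}+L_{3}L_{2}\bigr)(Y) - \tfrac{1}{6}L_{2}L_{3}L_{2}(Y).
\]

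The second step is to specialise to $Y = X+(X{\cdot}|a|)|a|$ and simplify. Because $L_{2}(|a|) = (|a|{\cdot}|a|)|a| = 0$, both $L_{3}L_{2}(|a|)$ and $L_{2}L_{3}L_{2}(|a|)$ vanish, and $L_{3}L_{2}(X)=(X{\cdot}|a|)L_{3}(|a|)$. The only algebraic identity that is not immediate is
$L_{2}L_{3}(X) = (X{\cdot}|a|)L_{3}(|a|)$,
together with its specialisation $L_{2}L_{3}(|a|)=0$. Both follow by expanding $L_{3}^{\theta}(a)=|a|\wedge \ell_{2}^{\theta}(a)$ with Proposition \ref{prop:actions}(2), applying Proposition \ref{prop:derivations}(1) to move $L_{2}$ across the resulting element of $\mathcal{L}_{2}$, and using $|a|\wedge|a|=0$. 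Once these are in hand, the three surviving terms $\tfrac{1}{2}L_{2}L_{3}(X)$, $\tfrac{1}{2}(X{\cdot}|a|)L_{3}(|a|)$, and $-(X{\cdot}|a|)L_{3}(|a|)$ cancel and what remains is $\tau_{1}^{\theta}(t_{\alpha})(X)=-L_{3}(X)=-L_{3}^{\theta}(\alpha)(X)$.

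I expect the main obstacle to be bookkeeping rather than any conceptual subtlety: organising the expansion of $e^{-L^{\theta}(a)}$ so that the truncation $L_{2}^{2}=0$ is invoked only where it applies (on $H$ or on $\mathcal{L}_{2}$, which is precisely where the intermediate tensors land) and correctly tracking the interplay between the contribution of $X$ itself and the correction $(X{\cdot}|a|)T^{\theta}(t_{\alpha})(|a|)$ introduced by $|t_{\alpha}|^{-1}$, so that the third-order term $-\tfrac{1}{6}L_{2}L_{3}L_{2}$ and the cross-term cancellations align as expected.
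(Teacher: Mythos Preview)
Your argument is correct. Note, however, that the paper does not actually prove this proposition: it is quoted from \cite{KK} (Theorem~6.5.3 there), with only the remark that it ``is deduced from (\ref{130101_27Sep16}).'' What you have written is precisely that deduction, carried out in full. The degree bookkeeping is right (each $L_j^{\theta}(a)$ raises tensor degree by $j-2$, so the degree-$2$ output from a degree-$1$ input requires exactly one $L_3$ and otherwise only $L_2$'s), the truncation via $L_2^2=0$ on $H$ and on $\mathcal{L}_2$ is applied only where the intermediate terms actually lie, and the identity $L_2L_3(X)=(X{\cdot}|a|)L_3(|a|)$ follows as you indicate from Propositions~\ref{prop:actions}(2) and~\ref{prop:derivations}(1). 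The final cancellation leaving $-L_3^{\theta}(a)(X)$ is clean.

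One minor comment: your remark that the non-separating hypothesis ``enters only to guarantee $|a|\neq 0$'' slightly undersells the situation. If $\alpha$ is separating then $|a|=0$ forces $L_2^{\theta}(a)=L_3^{\theta}(a)=0$, and your computation still goes through to give $\tau_1^{\theta}(t_\alpha)=0=-L_3^{\theta}(\alpha)$; this is consistent with the fact that separating twists lie in the Johnson kernel. So the identity in fact holds for all simple closed curves, and the non-separating hypothesis in the statement is there because that is the only case where the formula has content.
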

	 %

\subsection{The map $\ell^{\theta}_{2}$}

Let $\theta$ be a Magnus expansion.
We denote by $\theta_{2}(x)$ the degree $2$ part of $\theta(x)$ for $x \in \pi$.
	%
	\begin{prop}\label{prop:theta2_ell2}
	 For $\forall a \in \pi$,
	 $\ell^{\theta}_{2}(a)
	 =\theta_{2}(a)-\frac{1}{2}|a|^{2}$.
	\end{prop}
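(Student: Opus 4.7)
The plan is to expand $\ell^{\theta}(a) = \log(\theta(a))$ using the power-series definition of the logarithm and extract the degree-$2$ component.

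First I would set $u := \theta(a) - 1 \in \widehat{T}_{1}$. Since $\theta$ is a Magnus expansion we have $\theta(a) \equiv 1 + |a| \pmod{\widehat{T}_{2}}$, so the degree-$1$ part of $u$ is $|a|$ and the degree-$2$ part of $u$ is, by definition, $\theta_{2}(a)$.

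Next I would compute
\begin{equation*}
 \ell^{\theta}(a) = \log(1+u) = \sum_{k=1}^{\infty} \frac{(-1)^{k-1}}{k} u^{k}.
\end{equation*}
Because $u \in \widehat{T}_{1}$, the product $u^{k}$ lies in $\widehat{T}_{k}$, so only the terms with $k=1$ and $k=2$ contribute to the degree-$2$ part. The term $u$ contributes its own degree-$2$ part $\theta_{2}(a)$, while $-\frac{1}{2}u^{2}$ contributes $-\frac{1}{2}$ times the product of the degree-$1$ parts of the two factors of $u$, that is, $-\frac{1}{2}|a|\cdot|a| = -\frac{1}{2}|a|^{2}$. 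Summing these yields the claimed formula.

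There is no real obstacle here: the statement is a straightforward bookkeeping identity obtained by truncating $\log\circ\,\theta$ to degree $2$, and the only thing to verify carefully is that no $u^{k}$ with $k \geq 3$ has a degree-$2$ component, which is immediate from $u \in \widehat{T}_{1}$.
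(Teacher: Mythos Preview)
Your argument is correct and is essentially identical to the paper's own proof: both expand $\log(\theta(a))$ via the power series, observe that only the $k=1$ and $k=2$ terms contribute modulo $\widehat{T}_{3}$, and read off $\theta_{2}(a)-\tfrac{1}{2}|a|^{2}$ as the degree-$2$ part.
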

	\begin{proof}
	 Module $\widehat{T}_{3}$,
	 \begin{equation*}
	  \log(\theta(a)) \equiv \log(1+|a|+\theta_{2}(a))
	   =\sum_{k=1}^{\infty}\frac{(-1)^{k-1}}{k}(|a|+\theta_{2}(a))^{k}
	   \equiv |a|+\theta_{2}(a)-\frac{1}{2}|a|^{2}.
	 \end{equation*}
	\end{proof}
	It is easy to see that $\ell^{\theta}_{2}=\theta_{2}$ on $[\pi, \pi]$,
	especially $\ell^{\theta}_{2}(1) = \theta_{2}(1) = 0$.
	\begin{prop}\label{prop:ell2}\normalfont
	 For $\forall a, b \in \pi$,
	 \begin{enumerate}[(1)]
	  \item $\ell^{\theta}_{2}(ab)
		=\ell^{\theta}_{2}(a)+\ell^{\theta}_{2}(b)
		+\frac{1}{2}|a|\wedge |b|$,
	  \item $\ell^{\theta}_{2}(a^{-1})=-\ell^{\theta}_{2}(a)$,
	  \item $\ell^{\theta}_{2}(aba^{-1})=\ell(b)+|a|\wedge|b|$,
	  \item $\ell^{\theta}_{2}([a,b])=|a|\wedge|b|$.
	 \end{enumerate}
	\end{prop}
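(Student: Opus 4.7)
The plan is to derive all four identities from the single formula $\ell^{\theta}_{2}(x) = \theta_{2}(x) - \tfrac{1}{2}|x|^{2}$ of Proposition \ref{prop:theta2_ell2}, together with the multiplicativity of $\theta$, and then to bootstrap (2), (3), (4) from (1).

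For (1), I would start by expanding $\theta(ab) = \theta(a)\theta(b)$ modulo $\widehat{T}_{3}$. Since each factor has the form $1 + |{\cdot}| + \theta_{2}({\cdot}) + \cdots$, the degree-$2$ part is
\[
\theta_{2}(ab) = \theta_{2}(a) + \theta_{2}(b) + |a|\,|b|.
\]
Now I apply Proposition \ref{prop:theta2_ell2} to both sides and use $|ab| = |a|+|b|$:
\[
\ell^{\theta}_{2}(ab) = \theta_{2}(a) + \theta_{2}(b) + |a|\,|b| - \tfrac{1}{2}(|a|+|b|)^{2}.
\]
Regrouping $\theta_{2}(a) - \tfrac{1}{2}|a|^{2}$ and $\theta_{2}(b) - \tfrac{1}{2}|b|^{2}$ gives $\ell^{\theta}_{2}(a) + \ell^{\theta}_{2}(b)$, and the remaining cross terms are $|a|\,|b| - \tfrac{1}{2}(|a|\,|b| + |b|\,|a|) = \tfrac{1}{2}(|a|\,|b| - |b|\,|a|)$, which under the identification $\mathcal{L}_{2} = H\wedge H$ is $\tfrac{1}{2}|a|\wedge|b|$. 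This yields (1).

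Identity (2) follows by specialising (1) to $b = a^{-1}$: the left-hand side is $\ell^{\theta}_{2}(1) = 0$ and the wedge term $\tfrac{1}{2}|a|\wedge(-|a|)$ vanishes, so $\ell^{\theta}_{2}(a^{-1}) = -\ell^{\theta}_{2}(a)$. For (3), I would apply (1) twice to $a \cdot b \cdot a^{-1}$ and then substitute (2):
\[
\ell^{\theta}_{2}(aba^{-1})
= \ell^{\theta}_{2}(a) + \ell^{\theta}_{2}(b) + \tfrac{1}{2}|a|\wedge|b|
 - \ell^{\theta}_{2}(a) + \tfrac{1}{2}(|a|+|b|)\wedge(-|a|);
\]
the second wedge equals $\tfrac{1}{2}|a|\wedge|b|$ (since $|a|\wedge|a|=0$), and adding the two halves gives $\ell^{\theta}_{2}(b) + |a|\wedge|b|$. (I would note that the ``$\ell(b)$'' in the statement of (3) must be read as $\ell^{\theta}_{2}(b)$.) Finally, for (4), I write $[a,b] = (aba^{-1})\cdot b^{-1}$ and apply (1), (2), (3): the $\ell^{\theta}_{2}(b)$ from (3) cancels against $\ell^{\theta}_{2}(b^{-1})$, the wedge $\tfrac{1}{2}|aba^{-1}|\wedge|b^{-1}| = \tfrac{1}{2}|b|\wedge(-|b|)$ vanishes, and what remains is $|a|\wedge|b|$.

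There is no real obstacle — the whole proposition is a routine algebraic unwinding once Proposition \ref{prop:theta2_ell2} is in hand. The only point that requires a little care is keeping track of the identification $[X,Y] = XY - YX \leftrightarrow X\wedge Y$ so that the ``$\frac{1}{2}|a|\wedge|b|$'' appearing in (1) is correctly read as $\tfrac{1}{2}(|a|\,|b| - |b|\,|a|)$ inside $H^{\otimes 2}$; the rest is just bookkeeping of bilinear terms.
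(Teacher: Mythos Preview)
Your proposal is correct and follows essentially the same route as the paper: compute $\theta_{2}(ab)$ from multiplicativity, convert via Proposition~\ref{prop:theta2_ell2} to obtain (1), and then deduce (2)--(4) by successive specialisation. The only cosmetic difference is that for (3) the paper groups $a\cdot(ba^{-1})$ whereas you group $(ab)\cdot a^{-1}$, which makes no substantive difference.
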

	\begin{proof}
	 \begin{enumerate}[(1)]
	  \item Module $\widehat{T}_{3}$,
		\begin{equation*}
		 \theta(ab) \equiv (1+|a|+\theta_{2}(x))(1+|b|+\theta_{2}(y))
		  \equiv 1+|a|+|b|+|a||b|+\theta_{2}(a)+\theta_{2}(b).
		\end{equation*}
		Thus $\theta_{2}(ab)=|a||b|+\theta_{2}(a)+\theta_{2}(b)$.
		By using Proposition \ref{prop:theta2_ell2},
		\begin{eqnarray*}
		 \ell^{\theta}_{2}(ab)
		 &=& |a||b|+\theta_{2}(a)+\theta_{2}(b) - \frac{1}{2}(|a|^{2}+|a||b|+|b||a|+|b|^{2}) \\
		 &=& \ell^{\theta}_{2}(a)
		  + \ell^{\theta}_{2}(b)
		  + \frac{1}{2}(|a||b|-|b||a|). 
		\end{eqnarray*}
	  \item Using (1), we have $0=\ell^{\theta}_{2}(aa^{-1})
		=\ell^{\theta}_{2}(a)+\ell^{\theta}_{2}(a^{-1})$.
	  \item Using (1) and (2),
		\begin{eqnarray*}
		 \ell^{\theta}_{2}(aba^{-1})
		 &=& \ell^{\theta}_{2}(a)
		  +\ell^{\theta}_{2}(b) + \ell^{\theta}_{2}(a^{-1}) + \frac{1}{2}|b|\wedge|a^{-1}|
		  +\frac{1}{2}|a|\wedge|b| \\
		 &=& \ell(b)+|a|\wedge|b|.
		\end{eqnarray*}
	  \item Using (1) and (3),
		\begin{equation*}
		 \ell^{\theta}_{2}([a,b])
		  = \ell^{\theta}_{2}(aba^{-1})+\ell^{\theta}_{2}(b^{-1})
		  = \ell^{\theta}_{2}(b)+|a|\wedge|b|-\ell^{\theta}_{2}(b)
		  = |a|\wedge|b|.
		\end{equation*}
		%
		%
	 \end{enumerate}
	 %
	 %
	\end{proof}
        Note that we can define the map $\ell^{\theta}_{2}$ from
        Proposition \ref{prop:ell2} (1)
        and the initial data,
        $\ell^{\theta}_{2}(1)=0$
        and its values on generators of $\pi$.
        The map $\ell$ mentioned in \S \ref{subsec:main_theorem}
        is nothing but the map $\ell^{\theta}_{2}$
        with respect to a specific symplectic expansion $\theta$ (See \S \ref{subsec:ell}).

	Now let $\theta$ be a symplectic expansion.
	\begin{prop}\label{prop:Q|c|}
	 If $c \in \pi$ has a representative which is freely homotopic to
	 a non-separating simple closed curve on $\Sigma$,
	 then $\ell_{2}^{\theta}(c)(|c|) \in \mathbb{Q}|c| \subset H$.
	\end{prop}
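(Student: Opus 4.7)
The plan is to exploit that $t_\alpha$ fixes the free homotopy class of a non-separating simple closed curve $\alpha$ representing $c$, together with the Kawazumi--Kuno identity $T^\theta(t_\alpha)=e^{-L^\theta(\alpha)}$ in (\ref{130101_27Sep16}).

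First I would reduce to a convenient based representative. By Proposition \ref{prop:ell2}(3), if $c=gc'g^{-1}$ then $\ell_2^\theta(c)=\ell_2^\theta(c')+|g|\wedge|c'|$, and $(|g|\wedge|c'|)(|c'|)=(|c'|\cdot|g|)\,|c'|\in\mathbb{Q}|c'|=\mathbb{Q}|c|$; hence the property $\ell_2^\theta(c)(|c|)\in\mathbb{Q}|c|$ depends only on the free homotopy class of $c$. It therefore suffices to verify the conclusion for some specific based representative $a_0$ of $\alpha$. I would take $a_0$ to be a based loop in the free homotopy class of $\alpha$ fixed by $t_\alpha$; such an $a_0$ exists by a standard geometric construction (connect the basepoint to $\alpha$ by a path meeting the chosen annular support of $t_\alpha$ in a controlled radial way, so that the copy of $\alpha$ picked up by the radial segment under the twist cancels in the conjugation $\gamma\alpha\gamma^{-1}$).

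With $a_0$ in hand, $t_\alpha(a_0)=a_0$ combined with $\theta\circ t_\alpha=e^{-L^\theta(\alpha)}\circ\theta$ gives $\theta(a_0)=e^{-L^\theta(\alpha)}(\theta(a_0))$; taking logarithms (which commute with the algebra automorphism $e^{-L^\theta(\alpha)}$) yields
\[
\bigl(1-e^{-L^\theta(\alpha)}\bigr)\bigl(\ell^\theta(a_0)\bigr)=0.
\]
I would then read off the degree-$2$ component. Writing $w:=\ell_2^\theta(a_0)(|a_0|)$, the only contributions to $L^\theta(\alpha)(\ell^\theta(a_0))$ at degree $2$ come from $L_2^\theta(\alpha)(\ell_2^\theta(a_0))$ and $L_3^\theta(\alpha)(|a_0|)$; using $|a_0|=|\alpha|$ and $|\alpha|\cdot|\alpha|=0$, Proposition \ref{prop:derivations}(1) gives the first as $-|\alpha|\wedge w$, and Proposition \ref{prop:L_{3}} combined with Proposition \ref{prop:actions}(2) gives the second as $-|\alpha|\wedge w$ as well, so the sum is $-2|\alpha|\wedge w$.

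The step I expect to require the most care is showing that the higher iterates $L^\theta(\alpha)^k(\ell^\theta(a_0))$ for $k\geq 2$ contribute nothing at degree $2$. The key computation is $L_2^\theta(\alpha)(|\alpha|\wedge w)=0$, which follows from the derivation rule together with $|\alpha|\cdot|\alpha|=0$; combined with $L_2^\theta(\alpha)(|a_0|)=0$, a short induction on $k$ shows $L^\theta(\alpha)^k(\ell^\theta(a_0))$ vanishes in degrees $1$ and $2$ for all $k\geq 2$. Therefore the degree-$2$ part of $(1-e^{-L^\theta(\alpha)})(\ell^\theta(a_0))=0$ collapses to $-2|\alpha|\wedge w=0$ in $H\wedge H$. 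Since $\alpha$ is non-separating, $|\alpha|\neq 0$ in $H$, so $|\alpha|\wedge w=0$ forces $w\in\mathbb{Q}|\alpha|=\mathbb{Q}|a_0|$, proving the proposition for $a_0$ and hence, by the conjugation-invariance above, for every $c$ freely homotopic to $\alpha$.
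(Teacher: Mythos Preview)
Your argument is correct and reaches the same endpoint $|c|\wedge\ell_2^\theta(c)(|c|)=0$ as the paper, but by a genuinely different route. The paper's proof is a two-line appeal to the identity $L_2^\theta(c)L_3^\theta(c)=0$ on $H$, which is quoted from \cite[Proposition~6.5.1]{KK}; expanding $L_2^\theta(c)L_3^\theta(c)(X)$ with Propositions~\ref{prop:derivations} and~\ref{prop:L_{3}} gives $-(X\cdot|c|)\,|c|\wedge\{\ell_2^\theta(c)(|c|)\}$, and choosing $X$ with $X\cdot|c|\neq0$ finishes. You instead work geometrically: after the conjugation-invariance reduction (correct, and a nice observation), you pick a based representative $a_0$ with $t_\alpha(a_0)=a_0$, apply the Kawazumi--Kuno formula $T^\theta(t_\alpha)=e^{-L^\theta(\alpha)}$ to $\ell^\theta(a_0)$, and read off the degree-$2$ part of the fixed-point equation. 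Your degree count is right (only $L_2(\ell_2)$ and $L_3(\ell_1)$ land in degree~$2$), both terms are indeed $-|\alpha|\wedge w$, and your verification that the $k\geq2$ iterates contribute nothing in degree~$2$ is exactly Proposition~\ref{prop:derivations}(2) together with $L_2(|\alpha|)=0$. What you gain is self-containment: you never invoke the external identity $L_2L_3=0$, and your argument is closer in spirit to the Dehn-twist machinery used elsewhere in the paper (compare Lemma~\ref{lem:theta_2}). What the paper's proof buys is brevity and a formulation valid for any based representative of $c$, with no need to produce a $t_\alpha$-fixed loop.
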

	\begin{proof}
	 It has shown in \cite[Proposition 6.5j.1]{KK} that $L^{\theta}_{2}(c)L^{\theta}_{3}(c) = 0$
	 on $H$.
	 On the other hand,
	 for any $X \in H$, putting $Y := \ell^{\theta}_{2}(c)(X)$, we can compute
	 \begin{eqnarray*}
	  L^{\theta}_{2}(c)L^{\theta}_{3}(c)(X)
	  &=&
	  L^{\theta}_{2}(c)\left(
			    (X{\cdot}|c|)\ell^{\theta}_{2}(c) - |c|\wedge Y
			   \right) \\
	  &=&
	   -(X{\cdot}|c|)|c|\wedge \left\{\ell^{\theta}_{2}(c)(|c|)\right\}
	   + |c|\wedge \left\{
			(|c|\wedge Y)(|c|)
		       \right\} \\
	  &=&
	   -(X{\cdot}|c|)|c|\wedge \left\{\ell^{\theta}_{2}(c)(|c|)\right\}.
	 \end{eqnarray*}
	 %
	 Hence we obtain
	 $|c| \wedge \left\{\ell^{\theta}_{2}(c)(|c|)\right\}=0$.
	 This proves the proposition.
	\end{proof}
	 The action of $\omega=\sum_{i=1}^{g}X_{i}\wedge Y_{i} \in H \wedge H$ on $H$
	 is as follows:
	 For $A = \sum_{j=1}^{g} \xi_{j}X_{j}+\eta_{j}Y_{j} \in H$,
	 ($\xi_{j}, \eta_{j} \in \mathbb{Q}$),
	 \begin{equation*}
	  {\omega}(A)
	   = \sum_{i=1}^{g} (A{\cdot}X_{i})Y_{i}-(A{\cdot}Y_{i})X_{i}
	   = \sum_{i=1}^{g} -\eta_{i}Y_{i}-\xi_{i}X_{i}
	   = -A.
	 \end{equation*}
	 Since $\theta$ is a symplectic expansion, i.e.,
	 $\ell^{\theta}_{2}(\zeta) = \omega$,
	 we have the following:
  	\begin{prop}\label{prop:omega|a|}
	 For the boundary curve $\zeta$ of $\Sigma_{g,1}$
	 and $\forall a \in \pi$,
	 $\ell^{\theta}_{2}(\zeta)(|a|)=-|a|$.
	\end{prop}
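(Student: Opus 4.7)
The plan is to observe that the proposition is essentially a direct corollary of two facts already assembled in the preceding discussion: (i) the definition of a symplectic expansion, which forces $\ell^{\theta}_{2}(\zeta) = \omega$, and (ii) the explicit computation of the action of $\omega = \sum_{i=1}^{g} X_i \wedge Y_i$ on the symplectic basis. So no new machinery should be required; the work is purely bookkeeping.

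First I would invoke the hypothesis that $\theta$ is a symplectic expansion to identify $\ell^{\theta}_{2}(\zeta)$ with $\omega$. Concretely, $\ell^{\theta}(\zeta) = \omega$ by definition, and since $\omega \in \mathcal{L}_2 = H \wedge H$ sits in degree $2$, it coincides with its degree $2$ part, so $\ell^{\theta}_{2}(\zeta) = \omega$. Next I would apply to $|a| \in H$ the formula $\omega(A) = -A$ that was established in the paragraph just preceding the statement: writing $|a| = \sum_{j=1}^{g} \xi_{j} X_{j} + \eta_{j} Y_{j}$ for some $\xi_j, \eta_j \in \mathbb{Q}$ and using the defining property $X_i \cdot Y_j = \delta_{ij}$, $X_i \cdot X_j = Y_i \cdot Y_j = 0$ of the symplectic basis along with Proposition~\ref{prop:actions}(1), one gets
\begin{equation*}
\omega(|a|) = \sum_{i=1}^{g} \bigl((|a|\cdot X_i) Y_i - (|a|\cdot Y_i) X_i\bigr) = \sum_{i=1}^{g} (-\eta_i Y_i - \xi_i X_i) = -|a|.
\end{equation*}
Combining the two steps gives $\ell^{\theta}_{2}(\zeta)(|a|) = \omega(|a|) = -|a|$, which is the claim.

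There is no real obstacle; the only thing to be careful about is the sign convention in the action $(X \wedge Y)(Z) = (Z \cdot X) Y - (Z \cdot Y) X$ from Proposition~\ref{prop:actions}(1), since a different ordering of the intersection form would flip the overall sign. As long as we apply that convention consistently to each summand $X_i \wedge Y_i$ of $\omega$, the telescoping works and the answer is $-|a|$ rather than $+|a|$. Since the computation has in fact already been performed in the text just above the statement, the proof itself can be written as essentially a one-line reference to that display together with the equality $\ell^{\theta}_{2}(\zeta) = \omega$.
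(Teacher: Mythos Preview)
Your proof is correct and follows exactly the paper's own reasoning: the paper derives the proposition directly from the computation $\omega(A)=-A$ carried out immediately above the statement, together with the defining property $\ell^{\theta}_{2}(\zeta)=\omega$ of a symplectic expansion.
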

	%
	%


\section{Proof of Theorem \ref{thm:main}}\label{sec:proof}

 Let $\alpha, \beta$ be oriented simple closed curves on $\Sigma$.
 For the reason mentioned in Remark \ref{rmk:uselessness}(1),
 we may assume that $\alpha$ is non-separating on $\Sigma$.
 %
 We choose based loops $a$ and $b$ with the base point $p \in \partial\Sigma$
 which are freely homotopic to $\alpha$ and $\beta$ respectively,
  and we will denote the based homotopy classes of them by the same symbols $a$, $b$.


 Here we recall the classical formula about the action of $t_{\alpha}$, a right-handed Dehn twist along $\alpha$,
 on $H$;
 \begin{equation}\label{123029_27Sep16}
  |t_{\alpha}|(X) = X + (|a|{\cdot}X)|a|,~ (\forall X \in H).
 \end{equation}

\subsection{Degree two part of symplectic expansion of $t_{\alpha}(b)$}
 Focusing on the $H^{\otimes 2}$-part of symplectic expansion of
 $t_{\alpha}(b)$ and of $b$
 we have the following.
 \begin{lem}\label{lem:theta_2}
  Curves $\alpha$, $\beta$ and  $a, b \in \pi$ as above.
  Let $\theta$ be a symplectic expansion of $\pi$.
  Suppose that $i_{A}(\alpha, \beta)=0$.
  Then
  \begin{equation*}
   \theta_{2}(t_{\alpha}(b))-\theta_{2}(b) = |a|\wedge(\ell^{\theta}_{2}(a)|b|+\ell^{\theta}_{2}(b)|a|).
  \end{equation*}
 \end{lem}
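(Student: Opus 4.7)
The plan is to apply the Kawazumi--Kuno formula (\ref{130101_27Sep16}), which reads $\theta(t_\alpha(b)) = e^{-L^\theta(a)}(\theta(b))$, and to extract the degree-$2$ part of both sides. The key structural observation is that each $L_k^\theta(a)$, regarded as a derivation of $\widehat{T}$, shifts tensor degree by $k-2$: so $L_2^\theta(a)$ preserves degree, $L_3^\theta(a)$ raises degree by one, and $L_k^\theta(a)$ for $k\geq 4$ raises degree by at least two. The hypothesis $i_A(\alpha,\beta)=0$ says $|a|\cdot|b|=0$, from which $L_2^\theta(a)(|b|) = (|b|\cdot|a|)|a| = 0$ follows directly, and hence $L_2^\theta(a)(|b|^2)=0$ by the Leibniz rule.

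Writing $\theta_2(b)=\tfrac{1}{2}|b|^2+\ell_2^\theta(b)$ via Proposition \ref{prop:theta2_ell2}, I would first enumerate which composites in the expansion of $e^{-L^\theta(a)}$ can land in $H^{\otimes 2}$ when applied to $\theta(b)$. A case analysis on the multi-indices $(k_1,\dots,k_n)$ satisfying $\sum(k_i-2)$ equal to the needed degree shift shows that the only surviving contributions are $\theta_2(b)$, $-L_2^\theta(a)(\ell_2^\theta(b))$, $-L_3^\theta(a)(|b|)$, and $\tfrac{1}{2}L_2^\theta(a)(L_3^\theta(a)(|b|))$. All other monomials are killed because a derivation annihilates $1$, because $L_2^\theta(a)(|b|)=0$, or because of Proposition \ref{prop:derivations}(2), which forces $L_2^\theta(a)^2$ to vanish on $\mathcal{L}_2$. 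This combinatorial bookkeeping is the step I expect to require the most care.

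Next I would evaluate the surviving pieces. Proposition \ref{prop:derivations}(1) gives $-L_2^\theta(a)(\ell_2^\theta(b))=|a|\wedge(\ell_2^\theta(b)(|a|))$; Proposition \ref{prop:L_{3}} combined with Proposition \ref{prop:actions}(2) and $|a|\cdot|b|=0$ gives $L_3^\theta(a)(|b|)=-|a|\wedge(\ell_2^\theta(a)(|b|))$, so $-L_3^\theta(a)(|b|)=|a|\wedge(\ell_2^\theta(a)(|b|))$. Finally, the cubic correction $\tfrac{1}{2}L_2^\theta(a)(L_3^\theta(a)(|b|))$ must be shown to vanish: writing $L_3^\theta(a)(|b|)=-|a|\wedge V$ with $V=\ell_2^\theta(a)(|b|)\in H$, a second application of Proposition \ref{prop:derivations}(1) reduces $L_2^\theta(a)(|a|\wedge V)$ to a scalar multiple of $|a|\wedge|a|=0$. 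Adding the nonzero contributions to $\theta_2(b)$ recovers $\theta_2(b) + |a|\wedge(\ell_2^\theta(a)|b|+\ell_2^\theta(b)|a|)$, which is the required identity.
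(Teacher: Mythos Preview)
Your argument is correct, and the combinatorial bookkeeping you flag as delicate does go through: once $L_2^\theta(a)(|b|)=0$ and Proposition~\ref{prop:derivations}(2) are in hand, every composite $L_{k_1}^\theta(a)\cdots L_{k_n}^\theta(a)$ landing in $H^{\otimes 2}$ with $n\geq 3$ vanishes, and the three surviving corrections assemble exactly as you say.

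Your route differs from the paper's.  The paper does not expand $e^{-L^\theta(a)}$ term by term; instead it factors the total Johnson map as $T^\theta(t_\alpha)=\tau^\theta(t_\alpha)\circ|t_\alpha|$ and computes separately $T^\theta(t_\alpha)(|b|)$ and $T^\theta(t_\alpha)(\theta_2(b))$ modulo $\widehat{T}_3$.  For the first piece it invokes Proposition~\ref{prop:nonsepa-tau} (the identity $\tau_1^\theta(t_\alpha)=-L_3^\theta(\alpha)$ from \cite{KK}, which uses that $\alpha$ is non-separating); for the second it computes $|t_\alpha|^{\otimes 2}$ on $\theta_2(b)$ directly via the classical formula~(\ref{123029_27Sep16}).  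Your approach is more self-contained: you never touch the Johnson map $\tau^\theta$ or Proposition~\ref{prop:nonsepa-tau}, and in fact your computation does not use the non-separating hypothesis on $\alpha$ at all.  The price is the multi-index enumeration, whereas the paper's factorisation hides that combinatorics inside the quoted result $\tau_1^\theta(t_\alpha)=-L_3^\theta(\alpha)$.
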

 \begin{proof}
  The expansion of $t_{\alpha}(b)$ by a symplectic expansion $\theta$ of $\pi$
  modulo $\widehat{T}_{3}$ is computed as
  \begin{eqnarray}
   \theta(t_{\alpha}(b))
    &=& T^{\theta}(t_{\alpha})(\theta(b)) \nonumber \\
   &\equiv& T^{\theta}(t_{\alpha})(1+|b|+\theta_{2}(b))
    ~~\mod{\widehat{T}_{3}} \nonumber \\
   &=& 1+ T^{\theta}(t_{\alpha})(|b|)
    +T^{\theta}(t_{\alpha})(\theta_{2}(b)). \label{173635_29Sep16} 
  \end{eqnarray}
  %
  \begin{clm}
   $T^{\theta}(t_{\alpha})(|b|)
   \equiv |b| + |a| \wedge \left\{\ell^{\theta}_{2}(a)(|b|)\right\}
   ~\mod{\widehat{T}_{3}}$
  \end{clm}
  \begin{proof}[Proof of Claim 1.]
   It follows
   from the classical formula (\ref{123029_27Sep16})
   and the condition $|a|{\cdot}|b|=0$
   that %
   \begin{equation*}
    |t_{\alpha}(b)| = |b|.
   \end{equation*}
   Recall the definition of the Johnson map:
   \begin{equation*}
    T^{\theta}(t_{\alpha}) = \tau^{\theta}(t_{\alpha}) \circ |t_{\alpha}|.
   \end{equation*}
   Then we have
   \begin{equation}\label{113028_27Sep16}
    T^{\theta}(t_{\alpha})(|b|)
    = \tau^{\theta}(t_{\alpha})(|b|)
     \equiv |b|+\tau_{1}^{\theta}(t_{\alpha})(|b|)
     ~~\mod{\widehat{T}_{3}}.
   \end{equation}
   %
   Applying Proposition \ref{prop:nonsepa-tau} (note that we assume $\alpha$ is non-separating loop),
   \ref{prop:L_{3}} and \ref{prop:actions}(2),
   \begin{equation*}
    \tau_{1}^{\theta}(t_{\alpha})(|b|)
     = -L_{3}^{\theta}(\alpha)(|b|)
     = -(|a|\wedge\ell^{\theta}_{2}(a))(|b|)
     = |a|\wedge\left\{ \ell^{\theta}_{2}(a)(|b|) \right\}.
   \end{equation*}
  \end{proof}

  \begin{clm}
   $T^{\theta}(t_{\alpha})(\theta_{2}(b))
   \equiv \theta_{2}(b) + |a| \wedge \left\{ \ell^{\theta}_{2}(b)(|a|) \right\}
   ~\mod{\widehat{T}_{3}}$
  \end{clm}
  \begin{proof}[Proof of Claim 2.]
   Note that the map $|t_{\alpha}| \in \operatorname{Hom}(H, \widehat{T}_{1})$
   is also regarded as an automorphism of $\widehat{T}$ such that
   $|t_{\alpha}|(X_{1}\cdots X_{p})
   =|t_{\alpha}|^{\otimes p}(X_{1}\cdots X_{p})
   = |t_{\alpha}|(X_{1})\cdots|t_{\alpha}|(X_{1})$
   for $X_{1}\cdots X_{p} \in H^{\otimes p}$.
   Thus we have
   \begin{equation*}
    T^{\theta}(t_{\alpha})(\theta_{2}(b))
     = \tau^{\theta}(t_{\alpha})(|t_{\alpha}|^{\otimes 2}\theta_{2}(b))
   \end{equation*}
   %
   The lowest degree part of
   $\tau^{\theta}(t_{\alpha})(u)$ for $u \in H^{\otimes 2}$
   is equal to $u$
   since that of $\tau^{\theta}(t_{\alpha})(X)$ for $X \in H$ is $X$
   as we can see in (\ref{113028_27Sep16}).
   Applying Proposition \ref{prop:theta2_ell2},
  \begin{eqnarray*}
   \tau^{\theta}(t_{\alpha})(|t_{\alpha}|^{\otimes 2}\theta_{2}(b))
    &\equiv& |t_{\alpha}|^{\otimes 2}\theta_{2}(b)
    ~~ \mod{\widehat{T}_{3}} \\
   &=& |t_{\alpha}|^{\otimes 2}\ell^{\theta}_{2}(b)
    +\frac{1}{2}|t_{\alpha}|^{\otimes 2}|b|^{2}. 
  \end{eqnarray*}
   Now we claim that $\forall u \in \mathcal{L}_{2}$,
   $|t_{\alpha}|^{\otimes 2}(u) = u + |a| \wedge \left\{ u(|a|) \right\}$.
   In fact, for $X \wedge Y \in \mathcal{L}_{2}$,
   \begin{eqnarray*}
    |t_{\alpha}|^{\otimes 2} (X\wedge Y)
    &=& (X+(|a|{\cdot}X)|a|) \wedge (Y+(|a|{\cdot}Y)|a|) \\
    &=& X \wedge Y + |a| \wedge \left\{ (X \wedge Y)(|a|) \right\}. 
   \end{eqnarray*}
   We then have
   \begin{equation*}
    \tau^{\theta}(t_{\alpha})(|t_{\alpha}|^{\otimes 2}\theta_{2}(b))
     = \ell^{\theta}_{2}(b)+|a|\wedge\left\{ \ell^{\theta}_{2}(b)(|a|) \right\}
     +\frac{1}{2}|b|^{2}
     = \theta_{2}(b)+|a|\wedge\left\{ \ell^{\theta}_{2}(b)(|a|) \right\}.
   \end{equation*}
  \end{proof}

  From (\ref{173635_29Sep16}) with Claim 1 and Claim 2 we obtain
  \begin{equation*}
  \theta(t_{\alpha}(b))
	\equiv
	1+|b|
	+\theta_{2}(b)
	+|a|\wedge\left\{
	\ell^{\theta}_{2}(a)(|b|)+\ell^{\theta}_{2}(b)(|a|)
	\right\}
	~~\mod{\widehat{T}_{3}},
  \end{equation*}
  and a proof of the lemma has done.
 \end{proof}

 \subsection{The value $\ell^{\theta}_{2}(a)(|b|)+\ell^{\theta}_{2}(b)(|a|)$}
When $i_{G}(\alpha, \beta) = 0$ holds,
there exists a based homotopy class $b_{0} \in \pi$
corresponding to the curve $\beta$ so that
the Dehn twist $t_{\alpha}$ fixes it.
Then $\theta(t_{\alpha}(b_{0})) = \theta(b_{0})$,
especially $\theta_{2}(t_{\alpha}(b_{0})) - \theta_{2}(b_{0}) = 0$.
According to the lemma above,
if we take a symplectic expansion $\theta$,
we can restate the last equation that
$|a| \wedge \left\{
\ell^{\theta}_{2}(a)(|b_{0}|)+\ell^{\theta}_{2}(b_{0})(|a|)
\right\} = 0$.

In this subsection,
we discuss the behavior of the value $\ell^{\theta}_{2}(a)(|b|)+\ell^{\theta}_{2}(b)(|a|)$
for arbitrary chosen homotopy classes $|a|$ and $|b|$ of the curves.
In the case where the two curves $\alpha$ and $\beta$ are both non-separating on $\Sigma$,
we first suppose that corresponding homotopy classes $|a|$ and $|b|$ of the curves
are linearly independent on $H$,
and show that
the value $\ell^{\theta}_{2}(a)(|b|)+\ell^{\theta}_{2}(b)(|a|)$
with any symplectic expansion $\theta$
is in $\mathbb{Z}|a| + \mathbb{Z}|b| \subset H$ if $i_{G}(\alpha, \beta)=0$.
Second,
we see that,
when $|a|$ and $|b|$ are linearly dependent on $H$,
$\ell^{\theta}_{2}(a)(|b|)+\ell^{\theta}_{2}(b)(|a|)$
with a certain type of symplectic expansion $\theta$
is always in $\mathbb{Z}|a| + \mathbb{Z}|b|$ ($=\mathbb{Z}|a|$).
Finally,
in the case where $\beta$ is separating,
we will see the same behavior of
the value $\ell^{\theta}_{2}(a)(|b|)+\ell^{\theta}_{2}(b)(|a|) = \ell^{\theta}_{2}(b)(|a|)$
as in the first case.


 \subsubsection{The case where $\beta$ is non-separating.}\label{sec:non-sepa}
Let $\alpha$, $\beta$ be both non-separating simple closed curves on $\Sigma$.
We take homotopy classes $a$ and $b$ corresponding to the curves arbitrarily.

In the case where $|a|$ and $|b|$ are linearly independent on $H$,
the following holds:
 \begin{thm}\label{thm:non-sep-1}
  %
  Curves $\alpha$, $\beta$ and  $a, b \in \pi$ as above.
  Suppose that $|a|$ and $|b|$ are linearly independent on $H$.
  Let $\theta$ be a symplectic expansion.
  If $i_{G}(\alpha, \beta)=0$, then
  $\ell^{\theta}_{2}(a)(|b|) + \ell^{\theta}_{2}(b)(|a|) \in \mathbb{Z}|a|+\mathbb{Z}|b|$.
  %
  %
 \end{thm}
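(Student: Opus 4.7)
The plan is to first establish the identity for a distinguished choice of based loops and then transfer it to an arbitrary pair by conjugation. Since $i_{G}(\alpha,\beta)=0$ implies $i_{A}(\alpha,\beta)=0$, Lemma \ref{lem:theta_2} is available; moreover, since both curves are non-separating I may also use the symmetric version of Lemma \ref{lem:theta_2} in which the roles of $\alpha$ and $\beta$ are interchanged.

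First I exploit the geometric meaning of $i_{G}(\alpha,\beta)=0$: realize $\alpha$ and $\beta$ by disjoint simple closed curves on $\Sigma$. Because $\alpha$ is non-separating, $\Sigma\setminus\alpha$ is connected, so there is an arc $\gamma_{\beta}$ from $p\in\partial\Sigma$ to $\beta$ disjoint from $\alpha$; symmetrically there is an arc $\gamma_{\alpha}$ from $p$ to $\alpha$ disjoint from $\beta$. Set $a_{*}:=\gamma_{\alpha}{\cdot}\alpha{\cdot}\gamma_{\alpha}^{-1}$ and $b_{*}:=\gamma_{\beta}{\cdot}\beta{\cdot}\gamma_{\beta}^{-1}$. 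Each of these based loops has a representative disjoint from a regular neighborhood of the other curve, so $t_{\alpha}(b_{*})=b_{*}$ and $t_{\beta}(a_{*})=a_{*}$ as based homotopy classes. Applying Lemma \ref{lem:theta_2} to $t_{\alpha}(b_{*})=b_{*}$ yields
\[
|a|\wedge\bigl(\ell^{\theta}_{2}(a_{*})(|b|)+\ell^{\theta}_{2}(b_{*})(|a|)\bigr)=0,
\]
so the inner expression lies in $\mathbb{Q}|a|$. The swapped version applied to $t_{\beta}(a_{*})=a_{*}$ shows the same expression also lies in $\mathbb{Q}|b|$. Linear independence of $|a|$ and $|b|$ then forces $\ell^{\theta}_{2}(a_{*})(|b|)+\ell^{\theta}_{2}(b_{*})(|a|)=0$.

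Next I pass to arbitrary $a,b\in\pi$ freely homotopic to $\alpha,\beta$. Since free homotopy classes of oriented loops correspond to conjugacy classes in $\pi$, there exist $c_{1},c_{2}\in\pi$ with $a=c_{1}a_{*}c_{1}^{-1}$ and $b=c_{2}b_{*}c_{2}^{-1}$. Proposition \ref{prop:ell2}(3) gives $\ell^{\theta}_{2}(a)=\ell^{\theta}_{2}(a_{*})+|c_{1}|\wedge|a|$ and $\ell^{\theta}_{2}(b)=\ell^{\theta}_{2}(b_{*})+|c_{2}|\wedge|b|$. Evaluating $(|c_{1}|\wedge|a|)(|b|)$ and $(|c_{2}|\wedge|b|)(|a|)$ by Proposition \ref{prop:actions}(1) and using $|a|{\cdot}|b|=0$, one obtains
\[
\ell^{\theta}_{2}(a)(|b|)+\ell^{\theta}_{2}(b)(|a|)=(|b|{\cdot}|c_{1}|)\,|a|+(|a|{\cdot}|c_{2}|)\,|b|,
\]
which belongs to $\mathbb{Z}|a|+\mathbb{Z}|b|$ because the intersection pairing takes integer values on homology classes coming from $\pi$.

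The principal obstacle is the first step—producing based loops $a_{*},b_{*}$ that are simultaneously fixed by the opposite Dehn twists—which uses both the disjointness coming from $i_{G}(\alpha,\beta)=0$ and the non-separating property to guarantee the required connecting arcs exist. Once this is arranged, the algebraic heart of the proof is the double application of Lemma \ref{lem:theta_2}, which together with linear independence pins the target value in $\mathbb{Q}|a|\cap\mathbb{Q}|b|=\{0\}$; the transfer to arbitrary representatives is then routine bookkeeping with the cocycle identity Proposition \ref{prop:ell2}(3).
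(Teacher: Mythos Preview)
Your proof is correct and follows essentially the same route as the paper: construct based representatives $a_{*},b_{*}$ (the paper's $a_{0},b_{0}$) fixed by the opposite Dehn twists, apply Lemma~\ref{lem:theta_2} in both directions to force the expression into $\mathbb{Q}|a|\cap\mathbb{Q}|b|=\{0\}$, and then transfer to arbitrary $a,b$ via conjugation using Proposition~\ref{prop:ell2}(3). Your treatment is slightly more explicit about the existence of $a_{*},b_{*}$ (via connectivity of $\Sigma\setminus\alpha$ and $\Sigma\setminus\beta$), but the argument is otherwise identical.
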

 \begin{proof}

  Suppose that the curves $\alpha$, $\beta$ satisfy $i_{G}(\alpha, \beta) = 0$.
  While it is not always true that $t_{\beta}(a)=a$ and $t_{\alpha}(b)=b$,
  we may take based loops $a_{0}$ and $b_{0}$ with the base point $p \in \partial\Sigma$
  such that they are freely homotopic to $\alpha$ and $\beta$ respectively and
 satisfy
 \begin{equation*}
  t_{\beta}(a_{0})=a_{0},~
  t_{\alpha}(b_{0})=b_{0},
 \end{equation*}
 because $\alpha$ and $\beta$ are non-separating loops.
 Note that $|a_{0}|=|a|$ and $|b_{0}|=|b|$.
 Taking a symplectic expansion $\theta$
 and applying Lemma \ref{lem:theta_2}, we have
 \begin{equation*}\label{171741_27Nov15}
  \theta_{2}(t_{\beta}(a_{0}))-\theta_{2}(a_{0})
  = |b|\wedge \left\{ \ell^{\theta}_{2}(a_{0})(|b|)+\ell^{\theta}_{2}(b_{0})(|a|) \right\}
  = 0
 \end{equation*}
 and
 \begin{equation*}\label{171751_27Nov15}
  \theta_{2}(t_{\alpha}(b_{0}))-\theta_{2}(b_{0})
   = |a|\wedge \left\{ \ell^{\theta}_{2}(a_{0})(|b|)+\ell^{\theta}_{2}(b_{0})(|a|) \right\}
   = 0.
 \end{equation*}
 It follows from these two equations that
 \begin{equation*}
  \ell^{\theta}_{2}(a_{0})(|b|)+\ell^{\theta}_{2}(b_{0})(|a|) \in \mathbb{Q}|a| \cap \mathbb{Q}|b|.
 \end{equation*}
 Since $|a|$ and $|b|$ are linearly independent,
 $\mathbb{Q}|a| \cap \mathbb{Q}|b| = \{0\}$.
  We then know that
 \begin{equation*}
  \ell^{\theta}_{2}(a_{0})(|b|)+\ell^{\theta}_{2}(b_{0})(|a|) = 0.
 \end{equation*}
 %
  There exist $c, d \in \pi$ such that
  $a_{0} = cac^{-1}$,
  $b_{0} = dbd^{-1}$.
  By using Proposition \ref{prop:ell2}(3) and $|a|{\cdot}|b|=0$,
 \begin{eqnarray*}
  \ell^{\theta}_{2}(a_{0})(|b|)+\ell^{\theta}_{2}(b_{0})(|a|)
   &=& \ell^{\theta}_{2}(cac^{-1})(|b|)+\ell^{\theta}_{2}(dbd^{-1})(|a|) \\
   &=& \left\{ \ell^{\theta}_{2}(a)+|c|\wedge|a| \right\}(|b|)
   +\left\{ \ell^{\theta}_{2}(b)+|d|\wedge|b| \right\}(|a|) \\
  &=& \ell^{\theta}_{2}(a)(|b|)+\ell^{\theta}_{2}(b)(|a|)
   +(|b|{\cdot}|c|)|a|+(|a|{\cdot}|d|)|b|.
 \end{eqnarray*}
  Thus we conclude that
 %
 $\ell^{\theta}_{2}(a)(|b|)+\ell^{\theta}_{2}(b)|a| \in \mathbb{Z}|a|+\mathbb{Z}|b|$.
 \end{proof}

 Next we suppose that $|a|$ and $|b|$ are linearly dependent, i.e., $|b|=k|a|, (k \in \mathbb{Q})$.
 We claim that $k= \pm 1$.
 In fact, since $\beta$ is non-separating,
 we can take a symplectic generators $\{x_{1}, y_{1}, \dots, x_{g}, y_{g}\}$ of $\pi$
 such that $b=x_{1}$, and then we have
 $1 = X_{1}{\cdot}Y_{1} = |b|{\cdot}Y_{1} = k|a|{\cdot}Y_{1}$,
 which proves the claim.

 In this case we choose a symplectic expansion $\theta$ such that
 $\ell^{\theta}_{2}(\pi) \subset \frac{1}{2}{\wedge^{2}}H_{\mathbb{Z}} \subset \mathcal{L}_{2}$,
 where $H_{\mathbb{Z}}=H_{1}(\Sigma;\mathbb{Z})$.
 There exists such a symplectic expansion (see \S 3.3).
 We have the following:

 \begin{thm}\label{thm:non-sep-2}
  Curves $\alpha$, $\beta$, homology classes $a, b$ 
  as above.
  Suppose that $|b|=\pm|a|$,
  If we take a symplectic expansion $\theta$
  such that $\ell^{\theta}_{2}(\pi) \subset \frac{1}{2}{\wedge^{2}}H_{\mathbb{Z}}$,
  then it is always true that $\ell^{\theta}_{2}(a)(|b|)+\ell^{\theta}_{2}(b)(|a|) \in \mathbb{Z}|a|$.
 \end{thm}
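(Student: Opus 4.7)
The strategy is to rewrite the sum as $D(|a|)$ for an explicit $D\in\frac{1}{2}{\wedge^{2}}H$, then show $D(|a|)$ lies simultaneously in $\mathbb{Q}|a|$ and in the integral lattice $H_{\mathbb{Z}}$; primitivity of $|a|$, which holds because $\alpha$ is non-separating, then forces $D(|a|)\in\mathbb{Z}|a|$.

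Concretely, setting $\epsilon\in\{\pm 1\}$ so that $|b|=\epsilon|a|$, the sum equals $D(|a|)$ with
\[
D:=\epsilon\,\ell^{\theta}_{2}(a)+\ell^{\theta}_{2}(b).
\]
Proposition \ref{prop:Q|c|}, applied to both non-separating classes $a$ and $b$, gives $\ell^{\theta}_{2}(a)(|a|)\in\mathbb{Q}|a|$ and $\ell^{\theta}_{2}(b)(|b|)\in\mathbb{Q}|a|$, so $D(|a|)\in\mathbb{Q}|a|$. The main step is to prove $D\in{\wedge^{2}}H_{\mathbb{Z}}$. Introduce $c:=ab^{-\epsilon}\in\pi$; since $|c|=|a|-\epsilon|b|=0$, one has $c\in[\pi,\pi]$. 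An iterated use of Proposition \ref{prop:ell2}(1) and (4) shows that $\ell^{\theta}_{2}([\pi,\pi])\subset{\wedge^{2}}H_{\mathbb{Z}}$: each commutator contributes $|g|\wedge|h|\in{\wedge^{2}}H_{\mathbb{Z}}$, and in a product of elements of $[\pi,\pi]$ the cross terms from the product formula vanish because both factors have trivial abelianization. Propositions \ref{prop:ell2}(1) and (2) then give
\[
\ell^{\theta}_{2}(c)=\ell^{\theta}_{2}(a)-\epsilon\,\ell^{\theta}_{2}(b),
\]
since the cross term $\tfrac{1}{2}|a|\wedge|b^{-\epsilon}|$ is a scalar multiple of $|a|\wedge|a|=0$. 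Rearranging yields
\[
D=\epsilon\,\ell^{\theta}_{2}(c)+2\,\ell^{\theta}_{2}(b),
\]
and both summands lie in ${\wedge^{2}}H_{\mathbb{Z}}$: the first by the preceding observation, the second by the standing hypothesis $\ell^{\theta}_{2}(\pi)\subset\frac{1}{2}{\wedge^{2}}H_{\mathbb{Z}}$.

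Consequently $D(|a|)\in H_{\mathbb{Z}}$, and combined with $D(|a|)\in\mathbb{Q}|a|$ and the primitivity of $|a|$ in $H_{\mathbb{Z}}$, we conclude $D(|a|)\in\mathbb{Q}|a|\cap H_{\mathbb{Z}}=\mathbb{Z}|a|$, as desired. The only non-routine ingredient is the identification of the integral correction $2\,\ell^{\theta}_{2}(b)$ that converts the a priori half-integral element $D$ into an element of ${\wedge^{2}}H_{\mathbb{Z}}$ via the commutator $c=ab^{-\epsilon}$; this is precisely where both the hypothesis on $\theta$ and the hypothesis $|b|=\pm|a|$ are genuinely used.
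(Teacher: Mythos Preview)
Your proof is correct and follows essentially the same approach as the paper: both decompose the sum into a ``doubled'' term $2\ell^{\theta}_{2}(\cdot)\in{\wedge^{2}}H_{\mathbb{Z}}$ plus a commutator-subgroup correction in ${\wedge^{2}}H_{\mathbb{Z}}$, then intersect $\mathbb{Q}|a|$ with $H_{\mathbb{Z}}$ using primitivity of $|a|$. The only cosmetic differences are that the paper writes $b=a^{\epsilon}d$ with $d\in[\pi,\pi]$ (doubling the $a$-term) whereas you use $c=ab^{-\epsilon}$ (doubling the $b$-term), and you make the primitivity step explicit while the paper leaves it implicit.
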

 \begin{proof}
  There exists $d \in [\pi, \pi]$ such that $b=a^{\epsilon}d$,
  where $\epsilon = \pm 1$.
 \begin{equation*}
  \ell^{\theta}_{2}(a)(|b|)+\ell^{\theta}_{2}(b)(|a|)
   = \ell^{\theta}_{2}(a)(\epsilon|a|)+\ell^{\theta}_{2}(a^{\epsilon}d)(|a|)
  = 2{\epsilon}\ell^{\theta}_{2}(a)(|a|)+\ell^{\theta}_{2}(d)(|a|). 
 \end{equation*}
  %
  It follows from the condition
  $\ell^{\theta}_{2}(\pi) \subset \frac{1}{2}{\wedge^{2}}H_{\mathbb{Z}}$
  and Proposition \ref{prop:Q|c|}
  that $2\varepsilon\ell^{\theta}_{2}(a)(|a|) \in \mathbb{Z}|a|$.
  Since $\alpha$ and $\beta$ are non-separating,
  we know from Proposition \ref{prop:Q|c|} that
  $\ell^{\theta}_{2}(a)(|a|)$ and
  $\ell^{\theta}_{2}(b)(|b|)$ are both in $\mathbb{Q}|a|=\mathbb{Q}|b|$.
  We can compute $\ell^{\theta}_{2}(b)(|b|)$ as
  \begin{equation*}
   \ell^{\theta}_{2}(b)(|b|)
    = \ell^{\theta}_{2}(a^{\epsilon}d)(\epsilon|a|)
    = \ell^{\theta}_{2}(a)(|a|) + \epsilon\ell^{\theta}_{2}(d)(|a|).
  \end{equation*}
  %
  It follows that $\ell^{\theta}_{2}(d)(|a|) \in \mathbb{Q}|a|$.
  %
  Since $d \in [\pi, \pi]$,
  we have $\ell^{\theta}_{2}(d) \in H_{\mathbb{Z}}\wedge H_{\mathbb{Z}}$
  by Proposition \ref{prop:ell2}(4),
  and
  it indicates $\ell^{\theta}_{2}(d)(|a|) \in H_{\mathbb{Z}}$.
  Therefore we obtain $\ell^{\theta}_{2}(d)(|a|) \in \mathbb{Z}|a|$
  and the proof is done.
%
 \end{proof}
  %
 \subsubsection{The case where $\beta$ is separating}

 \begin{thm}\label{thm:sep}
  Let $\alpha$ be a non-separating simple closed curve
  and $\beta$ a separating simple closed curve on $\Sigma$.
  Homotopy classes $a$ and $b$ for the curves as above.
  Let $\theta$ be a symplectic expansion.
  If $i_{G}(\alpha, \beta)=0$, then
  $\ell^{\theta}_{2}(b)(|a|) \in \mathbb{Z}|a|$.
 \end{thm}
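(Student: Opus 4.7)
The plan is to mirror the argument of Theorem \ref{thm:non-sep-1}, exploiting the simplification afforded by the fact that $|b|=0$ when $\beta$ is separating. Under this condition, the quantity $\ell^{\theta}_{2}(a)(|b|)+\ell^{\theta}_{2}(b)(|a|)$ collapses to $\ell^{\theta}_{2}(b)(|a|)$, so only one term needs to be tracked.

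First, since $i_{G}(\alpha,\beta)=0$, I would choose a based representative $b_{0}$ of $\beta$ fixed by the Dehn twist, $t_{\alpha}(b_{0})=b_{0}$. Applying Lemma \ref{lem:theta_2} (and using $|b_{0}|=|b|=0$) gives
\[
 0 = \theta_{2}(t_{\alpha}(b_{0}))-\theta_{2}(b_{0}) = |a|\wedge \ell^{\theta}_{2}(b_{0})(|a|),
\]
so $\ell^{\theta}_{2}(b_{0})(|a|)\in \mathbb{Q}|a|$. Because $b_{0}$ and $b$ are conjugate in $\pi$ (say $b_{0}=dbd^{-1}$) and $|b|=0$, Proposition \ref{prop:ell2}(3) yields $\ell^{\theta}_{2}(b_{0})=\ell^{\theta}_{2}(b)$, so $\ell^{\theta}_{2}(b)(|a|)\in \mathbb{Q}|a|$.

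The main remaining step — and the main obstacle — is to upgrade this rational conclusion to an integral one, since the theorem is stated for an arbitrary symplectic expansion (unlike Theorem \ref{thm:non-sep-2}, which imposes an integrality hypothesis on $\ell^{\theta}_{2}(\pi)$). For this, I would use that $b\in [\pi,\pi]$: writing $b$ as a product of commutators $\prod_{i}[u_{i},v_{i}]$ and iterating Proposition \ref{prop:ell2}(1) (the cross terms $\tfrac{1}{2}|\cdot|\wedge|\cdot|$ all vanish because each factor is homologically trivial), together with Proposition \ref{prop:ell2}(4), gives
\[
 \ell^{\theta}_{2}(b)=\sum_{i}|u_{i}|\wedge|v_{i}|\ \in\ H_{\mathbb{Z}}\wedge H_{\mathbb{Z}}.
\]
Consequently $\ell^{\theta}_{2}(b)(|a|)\in H_{\mathbb{Z}}$.

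Finally, to combine $\ell^{\theta}_{2}(b)(|a|)\in \mathbb{Q}|a|\cap H_{\mathbb{Z}}$ into $\mathbb{Z}|a|$, I would invoke the standard fact that a non-separating simple closed curve on $\Sigma$ represents a primitive class in $H_{\mathbb{Z}}$; hence $\mathbb{Q}|a|\cap H_{\mathbb{Z}}=\mathbb{Z}|a|$, and the conclusion $\ell^{\theta}_{2}(b)(|a|)\in \mathbb{Z}|a|$ follows. The only real subtlety is ensuring the commutator decomposition really lands in $H_{\mathbb{Z}}\wedge H_{\mathbb{Z}}$ rather than the larger $\mathcal{L}_{2}=H\wedge H$, and this is exactly where the separating hypothesis on $\beta$ is used.
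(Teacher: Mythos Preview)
Your proof is correct and follows essentially the same route as the paper: choose a fixed representative $b_{0}$, apply Lemma~\ref{lem:theta_2} with $|b|=0$ to land in $\mathbb{Q}|a|$, pass from $b_{0}$ to $b$ via conjugation, and then use $b\in[\pi,\pi]$ together with Proposition~\ref{prop:ell2}(4) to force $\ell^{\theta}_{2}(b)(|a|)\in H_{\mathbb{Z}}$. The paper compresses the last two steps into a reference to ``the proof of the previous theorem'' and leaves the primitivity of $|a|$ implicit, whereas you spell both out explicitly.
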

 \begin{proof}
  Suppose $i_{G}(\alpha, \beta) = 0$
  and take a based loop $b_{0}$ satisfying $t_{\alpha}(b_{0})=b_{0}$.
 Note that $|b_{0}|=|b|=0$.
 Taking a symplectic expansion $\theta$
 and applying Lemma \ref{lem:theta_2}, we have
 \begin{equation*}
  \theta_{2}(t_{\alpha}(b_{0}))-\theta_{2}(b_{0})
   = |a|\wedge \left\{ \ell^{\theta}_{2}(b_{0})(|a|) \right\}
   = 0.
 \end{equation*}
 It follows that
  $\ell^{\theta}_{2}(b_{0})(|a|) \in \mathbb{Q}|a|$.
 There exist $d \in \pi$ such that $b_{0} = dbd^{-1}$.
 From Proposition \ref{prop:ell2}(3) and $|b|=0$,
 \begin{equation*}
  \ell^{\theta}_{2}(b_{0})(|a|)
   = \ell^{\theta}_{2}(dbd^{-1})(|a|)
   = \ell^{\theta}_{2}(b)(|a|).
 \end{equation*}
  Thus we have $\ell^{\theta}_{2}(b)(|a|) \in \mathbb{Q}|a|$.
  As in the proof of the previous theorem we can see $\ell^{\theta}_{2}(b)(|a|) \in H_{\mathbb{Z}}$.
  This completes the proof.
 \end{proof}

\subsection{The map $\ell:\pi \rightarrow H\wedge H$}\label{subsec:ell}
With respect to a symplectic generators $\{x_{j}, y_{j}\}_{1 \leq j \leq g}$
of $\pi = \pi_{1}(\Sigma_{g,1}, p)$ ($g \geq 1$),
it is shown \cite{M, Ku} that there exists a symplectic expansion $\theta_{0}$ of $\pi$
satisfying
\begin{equation*}
 \ell^{\theta_{0}}_{2}(x_{j}) = \frac{1}{2}X_{j} \wedge Y_{j}, ~~
  \ell^{\theta_{0}}_{2}(y_{j}) = -\frac{1}{2}X_{j} \wedge Y_{j},
\end{equation*}
for $1 \leq j \leq g$.
%
%
%
We can easily see from Proposition \ref{prop:ell2} that
the map $\ell^{\theta_{0}}_{2}$ is identical with the map $\ell$ defined in \S \ref{sec:intro}.
Note that $\ell(\pi) \subset \frac{1}{2}{\wedge^{2}}H_{\mathbb{Z}}$.
Applying Theorem \ref{thm:non-sep-1}, \ref{thm:non-sep-2} and \ref{thm:sep}
to the symplectic expansion $\theta_{0}$,
we obtain Theorem \ref{thm:main}.




\end{document}